\newtheorem{theorem}[equation]{Theorem}
\newtheorem{corollary}[equation]{Corollary}
\newtheorem{lemma}[equation]{Lemma}
\newtheorem{proposition}[equation]{Proposition}
\newtheorem{remark}[equation]{Remark}
\numberwithin{equation}{section}
\newcommand{\Var}{\hbox{Var}\,}
\newcommand{\ot}{\otimes}
\newcommand{\Z}{\mathbb{Z}}
\newcommand{\fg}{{\mathfrak g}}
\newcommand{\fh}{{\mathfrak h}}
\newcommand{\Max}{\hbox{Max}}
\newcommand{\ga}{\alpha}
\newcommand{\gs}{\sigma}
\newcommand{\eps}{\epsilon}
\newcommand{\proof}{{\bf Proof\ \ }}
\newcommand{\qed}{\hfill $\Box$}
\newcommand{\gl}{\lambda}
\newcommand{\End}{\hbox{End}}
\newcommand{\cL}{{\mathcal L}}
\newcommand{\Aut}{{\rm Aut}}
\newcommand{\Gal}{{\mathcal Gal}}
\newcommand{\kalg}{{k\hbox{-alg}}}
\newcommand\Spec{\text{\rm Spec}\,}
\newcommand{\cI}{\mathcal{I}}
\newcommand{\cM}{\mathcal{M}}
\newcommand{\ev}{\mathrm{ev}}
\title{Maximal Ideals and Representations of Twisted Forms of Algebras}
\author{Michael Lau$\hbox{}^{1 *}$ and
Arturo Pianzola$\hbox{\,}^{2,3}$\thanks{Funding from the Natural Sciences and
Engineering Research Council of Canada is gratefully acknowledged.} \vspace{0.3cm}
\\$\hbox{\ \,}^1${\small Universit\'e Laval,
D\'epartement de math\'ematiques et de statistique},\\ {\small Qu\'ebec, QC, Canada G1V 0A6}\\ {\small Email:\ Michael.Lau@mat.ulaval.ca}\vspace{0.1cm}\\
$\hbox{\ \,}^2${\small University of Alberta, Department of
Mathematical and Statistical Sciences,}\\{\small Edmonton, AB,
Canada T6G 2G1}\\{\small Email:\ a.pianzola@math.ualberta.ca}\\
$\hbox{\ \,}^3${\small Centro de Altos Estudios en Ciencias Exactas,} \\
{\small Avenida de Mayo 866, (1084) Buenos Aires, Argentina}}
\date{}
\begin{document}
\maketitle

\begin{small}
\noindent {\bf Abstract.}
Given a central simple algebra $\fg$ and a Galois extension of base rings $S/R$, we show that the maximal ideals of twisted $S/R$-forms of the algebra of currents $\fg(R)$ are in natural bijection with the maximal ideals of $R$.  When $\fg$ is a Lie algebra, we use this to give a complete classification of the finite-dimensional simple modules over twisted forms of $\fg(R)$.

\bigskip

\noindent {\bf Keywords:} Galois descent, maximal ideals, finite-dimensional modules, multiloop algebras, twisted forms

\bigskip

\noindent
{\bf MSC:} 12G05, 17A60, 17B10, 17B67


\end{small}
\maketitle

\section{Introduction}
Let $S/R$ be a (finite) Galois extension of commutative, associative, and unital algebras over a field $k$, and let $\fg$ be a finite-dimensional central simple $k$-algebra.  Let $\cL$ be an  $S/R$-form of $\fg \otimes_k R$, that is, an $R$-algebra $\cL$ such that
\begin{equation}\label{form}
 \cL \otimes_R S \simeq \fg \otimes_k S
 \end{equation}
as algebras over $S$.

In this paper we accomplish two tasks:
\medskip

(1) We establish a natural correspondence between the maximal ideals of $\cL$ and those of the base ring $R.$
\medskip

(2) If $\fg$ is a Lie algebra, $k$ is algebraically closed of characteristic $0$, and $R$ is of finite type, we describe all the finite-dimensional irreducible modules of $\cL$ and classify them up to isomorphism.

\medskip

In what follows, we will denote $\fg \otimes_k S$ as $\fg(S)$. Recall that if $\Gamma$ is the Galois group of $S/R$, then there is a natural correspondence between the set of isomorphism classes of $S/R$-forms of $\fg(R)=\fg\ot_k R$ and the pointed set of non-abelian Galois cohomology $\hbox{H}^1\big(\Gamma,\hbox{Aut}_{S-alg}\, \fg(S)\big).$  See \cite{KnusOjanguren}, for example.
\medskip

For example, consider the multiloop algebra $\cL(\fg,\gs)$, where $\fg$ is a finite-dimensional Lie algebra over an algebraically closed field $k$ and $\gs$ is an $N$-tuple of commuting automorphisms
$$\gs_1,\ldots,\gs_N:\ \fg\rightarrow\fg$$ 
of finite orders $m_1,\ldots,m_N$, respectively.  This is a $\Z^N$-graded Lie subalgebra of the Lie algebra $\fg(S)$, where $S=k[t_1^{\pm 1},\ldots,t_N^{\pm 1}]$:
$$\cL(\fg;\gs)=\bigoplus_{j\in\Z^N}\fg_j\ot t_1^{j_1}t_2^{j_2}\cdots t_N^{j_N},$$
where $\fg_j=\{x\in\fg\ |\ \gs_i(x)=\xi_i^{j_i}x\hbox{\ for all\ }i\}$, for fixed primitive $m_i$th roots of unity $\xi_i\in k$.  Then $\cL(\fg,\gs)$ is an $S/R$-form of $\fg(R)$, where $R=k[t_1^{\pm m_1},\ldots,t_N^{\pm m_N}]$.  The Galois group $\Gamma$ of $S/R$ is $\Z_{m_1}\times\cdots\times\Z_{m_N}$, and the corresponding (constant) $1$-cocycle in $\hbox{H}^1\big(\Gamma,\hbox{Aut}_{S-alg}\, \fg(S)\big)$ is the group homomorphism taking a fixed generator $\ga_i$ of $\Z_{m_i}$ to $\gs_i^{-1}\otimes 1$.  Such algebras play an important role in affine Kac-Moody, toroidal, and extended affine Lie theory.\footnote{For simplicity of notation, we use integral powers of the variables $t_i$, though fractional exponents are sometimes used to work with the absolute Galois group of the base ring $R$ or with twisted modules for vertex algebras.} 

\medskip
We open the paper with a detailed investigation of the maximal ideals of twisted forms $\cL$.\footnote{Throughout this paper, all ideals are assumed to be two-sided unless explicit mention to the contrary.}  Given any ideal $\cI$ of the $R$-algebra $\cL$, we show that there is a unique $\Gamma$-stable ideal $J(\cI)\subseteq S$ for which $\cI\ot_R S$ maps to $\fg\ot_k J(\cI)$ under the isomorphism $\cL\ot_R S\rightarrow\fg\ot_k S$.  As all maximal ideals $\cI$ of the $k$-algebra $\cL$  are $R$-stable, this produces a bijection $\psi:\ \cI\mapsto J(\cI)\cap R$ between maximal ideals of the $k$-algebra $\cL$ and the set $\Max(R)$ of maximal ideals of $R$.  Explicitly, $\psi^{-1}:\ I\mapsto I\cL$ for maximal ideals $I\subseteq R$.

To have access to the attractive results of classical representation theory, we then assume that $\fg$ is a finite-dimensional simple Lie algebra and $R$ is of finite type over an algebraically closed field $k$ of characteristic $0$.  The classification of finite-dimensional simple $\cL$-modules $V$ proceeds by observing that the kernel of the representation $\phi:\ \cL\rightarrow \End{_k}(V)$ is an intersection of a finite collection of distinct maximal ideals $\cI_1,\ldots,\cI_n\subseteq\cL$.  Given any maximal ideals $M_1,\ldots, M_n\in \Max(S)$ lying over the maximal ideals $\psi(\cI_1),\ldots,\psi(\cI_n)\in \Max(R)$, respectively, we obtain evaluation maps
$$\ev_M:\ \cL\hookrightarrow\fg\ot_k S\rightarrow\left(\fg\ot_k S/M_1\right)\oplus\cdots\oplus\left(\fg\ot_k S/M_n\right)\simeq \fg^{\oplus n}.$$
We then use properties of forms to show that $\ev_M$ is surjective and descends to an isomorphism $\ev_M:\ \cL/\ker\phi\stackrel{\simeq}{\rightarrow}\fg^{\oplus n}.$  The finite-dimensional simple $\cL$-modules $V$ are thus pullbacks of tensor products of $\fg$-modules along $\ev_M$:
$$V\simeq V(\gl,M)=V_{\gl_1}(M_1)\ot_k\cdots\ot_k V_{\gl_n}(
M_n),$$
for some nonzero dominant integral highest weights $\gl_1,\ldots,\gl_n$ of $\fg$ (relative to a triangular decomposition $\fg=\mathfrak{n}_-\oplus\mathfrak{h}\oplus\mathfrak{n}_+$) and maximal ideals $M_1,\ldots,M_n\in\Max(S)$, where $V_{\gl_i}(M_i)$ is the simple $\fg$-module of highest weight $\gl_i$, viewed as an $\cL$-module via the composition of maps
$$\cL\stackrel{\ev_{M_i}}{\rightarrow}\fg\ot_k S/M_i\simeq\fg\rightarrow\End (V_{\gl_i}).$$

Two such representations $V(\gl,M)=V_{\gl_1}(M_1)\ot_k\cdots\ot_k V_{\gl_m}(M_m)$ and $V(\mu,N)=V_{\mu_1}(N_1)\ot_k\cdots\ot_k V_{\mu_n}(N_n)$ are isomorphic $(\cL/\ker\phi)$-modules, and thus isomorphic $\cL$-modules, if and only if their highest weights are equal, relative to the induced triangular decomposition
$$\cL/\ker\phi=\ev_M^{-1}(\mathfrak{n}_-^{\oplus n})\oplus\ev_M^{-1}(\mathfrak{h}^{\oplus n})\oplus\ev_M^{-1}(\mathfrak{n}_+^{\oplus n}).$$
The cohomological interpretation of forms leads to an action of the group $\Gamma$ on $P_+\times\Max(S)$, for which $V(\gl,M)\simeq V(\mu,N)$ if and only if $m=n$ and
$$(\gl_i,M_i)= {^{\gamma_i}}(\mu_i,N_i)$$
for some $\gamma_1,\ldots,\gamma_n\in\Gamma$.  This classification (Proposition \ref{mainprop}) is then described in terms of $\Gamma$-invariant functions from the maximal spectrum $\Max(S)$ to the set $P_+$ of dominant integral weights.  This gives a constructive description (Theorem \ref{correspondence}) of the moduli space of finite-dimensional simple $\cL$-modules in terms of finitely supported $\Gamma$-invariant functions $\Max(S) \rightarrow P_+$.

One of our main motivations in the present paper was to generalize and provide more intuitive proofs of previous work on (twisted) loop and multiloop algebras.  See \cite{multiloop} or \cite{senesi} for a summary of past work on this problem.  However, the interpretation of isomorphism classes as spaces of $\Gamma$-equivariant maps used in past work does not generalize to our context of twisted forms.  Instead, the $\Gamma$-equivariant functions had to be reinterpreted as {\em $\Gamma$-invariant} functions $\Max(S)\rightarrow P_+$.  This turned out to be the correct perspective to include cases where there is no natural action of $\Gamma$ on the space $P_+^\times$ of nonzero dominant integral weights.  More significantly, with new proofs, we have eliminated all dependence on the $\Z^N$-grading of $\cL(\fg,\gs)$, a point that was crucial in the arguments of \cite{multiloop}.  This lets us apply our work to non-graded contexts, including a classification of modules for the mysterious Margaux algebras explained in Section \ref{ot}.


Perhaps the most striking feature of the present work is its nearly complete independence from the particular $S/R$-form under consideration.  The maximal ideals of {\em any} $S/R$-form $\cL$ of $\fg(R)$ are in bijection with $\Max(R)$, and the finite-dimensional simple $\cL$-modules are evaluation modules enumerated by finitely supported $\Gamma$-invariant maps $\Max(S)\rightarrow P_+$.  Indeed, the only place where the Galois cocycle (and hence the isomorphism class) of the $S/R$-form plays an explicit role is in the isomorphism criterion for $\cL$-modules (Proposition \ref{mainprop}).  But in many interesting examples, even this condition vanishes, as we illustrate in Section \ref{ot}.

\bigskip

\noindent {\bf Acknowledgements:} We would like to thank Jean Auger and Zhihua Chang for their careful reading of the manuscript.

\bigskip

\noindent {\bf Notation:} Throughout this paper, $k$ will denote a field. We let $k^\times=k\setminus\{0\}$ and denote the set of nonnegative integers by $\Z_+$ .  The category of finitely generated unital commutative associative $k$-algebras will be denoted by $\kalg$, and we will write $\Max(S)$ for the maximal spectrum of each $S\in\kalg$.  

\section{Twisted forms and their maximal ideals}\label{ketto}
In this section, $k$ will denote an arbitrary field and $S/R$ will be a finite Galois extension in $\kalg$ with Galois group $\Gamma.$  Let $\fg$ be a finite-dimensional central simple  algebra over $k$, and let $R\in\kalg$.   We may view $\fg(R)\cong \fg\ot_k R$ as an algebra over $R$ by base change, the multiplication given by  $(x\ot r)(y\ot s)=xy\ot rs$ (for each $x,y\in\fg$ and $r,s\in R$).  As before,  $\cL$ will denote an $S/R$-form  of $\fg(R).$  Any such $\cL$ is obviously an algebra over $k$ by restriction of scalars, and we may thus speak of {\em $k$-ideals} and {\em $R$-ideals} of $\cL$, namely the ideals of $\cL$ viewed as an algebra over $k$ and over $R$, respectively.\footnote{We remind the reader that the word {\em ideal} means two-sided ideal.}  The goal of this section is to classify the maximal $k$-ideals of $\cL$.

\medskip

Since Galois extensions are faithfully flat, we have the following general facts.  See \cite[Thm 7.5]{matsumura}, for instance.

\begin{lemma}\label{basiclemma} Let $I$ be an ideal of $R$, and let $M$ be an $R$-module.
\begin{enumerate}
\item[{\rm(1)}] The canonical map
\begin{align*}
M&\rightarrow M\ot_R S\\
x&\mapsto x\ot 1
\end{align*}
is injective.  In particular, $R$ can be identified with a $k$-subalgebra of $S$.
\item[{\rm(2)}] After viewing $R$ inside of $S$ via (1), $IS$ is an ideal of $S$ and $R\cap IS=I$.
\end{enumerate}
\end{lemma}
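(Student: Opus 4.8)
The plan is to derive both assertions from the faithful flatness of $S$ over $R$, which holds because $S/R$ is a finite Galois extension. Only one step is more than formal bookkeeping, namely a standard splitting trick; everything else is diagram chasing.

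For part (1), write $\iota\colon M\to M\ot_R S$ for the canonical map $x\mapsto x\ot 1$ and put $N=\ker\iota$, so that $0\to N\to M\to M\ot_R S$ is exact, the second map being $\iota$. Since $S$ is flat over $R$, applying $-\ot_R S$ preserves exactness, giving an exact sequence $0\to N\ot_R S\to M\ot_R S\to M\ot_R S\ot_R S$ whose second map is $\iota\ot\id_S$, $m\ot s\mapsto (m\ot 1)\ot s$. Composing $\iota\ot\id_S$ with the map $M\ot_R S\ot_R S\to M\ot_R S$ induced by the multiplication $S\ot_R S\to S$, $s_1\ot s_2\mapsto s_1 s_2$, recovers the identity of $M\ot_R S$; hence $\iota\ot\id_S$ is split injective, so $N\ot_R S=0$, and faithful flatness forces $N=0$. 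Specializing to $M=R$ shows that the structure map $R\to R\ot_R S\cong S$ is injective, identifying $R$ with a $k$-subalgebra of $S$.

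For part (2), $IS$ is by construction the $S$-submodule of $S$ generated by $I$, hence an ideal of $S$, and $I\subseteq R\cap IS$ is immediate. For the reverse inclusion, tensor $0\to I\to R\to R/I\to 0$ with $S$ over $R$; flatness yields an exact sequence $0\to I\ot_R S\to S\to (R/I)\ot_R S\to 0$, and since the image of $I\ot_R S\to S$ is exactly $IS$, we obtain an isomorphism $(R/I)\ot_R S\cong S/IS$ under which the canonical map of part (1) for the $R$-module $R/I$ becomes the reduction $R/I\to S/IS$. By part (1) this reduction is injective, so any $r\in R\cap IS$ has zero image in $S/IS$, hence zero image in $R/I$, i.e.\ $r\in I$; therefore $R\cap IS=I$.

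The argument presents no genuine obstacle: the only points requiring a little care are checking that $\iota\ot\id_S$ is split by the multiplication map $S\ot_R S\to S$, and that the isomorphism $(R/I)\ot_R S\cong S/IS$ is compatible with the canonical maps into $S$. Once these are verified, both statements follow at once from faithful flatness of $S/R$.
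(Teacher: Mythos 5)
Your proof is correct and follows the same route as the paper, which simply notes that $S/R$ is faithfully flat and cites \cite[Thm 7.5]{matsumura} for both assertions. Your write-up just supplies the standard details (the splitting of $\iota\ot\id_S$ by multiplication, and the identification $(R/I)\ot_R S\cong S/IS$) that the paper delegates to the reference.
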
\qed

Up to coboundary, we can associate a Galois $1$-cocycle
$$u=(u_\gamma)_{\gamma\in\Gamma}\in Z^1\Big(\Gamma,\Aut_{S\hbox{-}alg}\big(\fg(S)\big)\Big)$$
to $\cL$, such that
$$\cL\simeq\cL_u=\{z\in\fg\ot S\ |\ u_\gamma{}^\gamma z=z\hbox{\ for all\ }\gamma\in\Gamma\}.$$
We therefore can (and henceforth will) view $\cL$ as an $R$-subalgebra of $\fg(S)=\fg\ot S$.    Note that the $S$-algebra isomorphism
$$\cL\ot_R S\simeq\fg(R)\ot_RS=\fg(S)$$
may be realized as the multiplication map
\begin{eqnarray}
\mu:&\ \cL\ot_RS&\longrightarrow\ \ \ \ \ \fg(S)\nonumber\\
&\left(\sum_i x_i\ot s_i\right)\ot s&\longmapsto\ \ \sum_ix_i\ot s_is\label{mult}
\end{eqnarray}
for all $\sum_i x_i\ot s_i\in\cL$ and $s\in S$.  This will allow us to associate an ideal of $S$ to every $R$-ideal of $\cL$.
\begin{lemma}\label{existenceofJ} Let $\cI$ be an $R$-ideal of $\cL$.  Then $\cI\ot_RS$ is an $S$-ideal of $\cL\ot_RS$, and there is a unique ideal $J=J(\cI)\subseteq S$ such that $\fg\ot_k J=\mu(\cI\ot_RS)$.
\end{lemma}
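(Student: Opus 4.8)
The plan is as follows. Since $\cL\ot_R S \cong \fg(S)$ via the multiplication map $\mu$ of \eqref{mult}, and the functor $-\ot_R S$ is exact on the flat (indeed faithfully flat) extension $S/R$, applying $-\ot_R S$ to the inclusion of the $R$-ideal $\cI\hookrightarrow\cL$ yields an inclusion $\cI\ot_R S\hookrightarrow\cL\ot_R S$ whose image is an $S$-ideal of $\cL\ot_R S$; this is because tensoring the ideal relations $\cL\cdot\cI\subseteq\cI$ and $\cI\cdot\cL\subseteq\cI$ with $S$ over $R$ gives $(\cL\ot_R S)\cdot(\cI\ot_R S)\subseteq\cI\ot_R S$ and likewise on the other side. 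Hence $\mu(\cI\ot_R S)$ is an $S$-ideal of $\fg(S)=\fg\ot_k S$. The content of the lemma is therefore to show that every $S$-ideal of $\fg\ot_k S$ is of the form $\fg\ot_k J$ for a unique ideal $J\subseteq S$.

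For this, I would use that $\fg$ is a finite-dimensional central simple $k$-algebra. Fix a $k$-basis $\{e_1,\dots,e_d\}$ of $\fg$, so $\fg\ot_k S=\bigoplus_i e_i\ot S$ as an $S$-module. Given an $S$-ideal $\mathfrak{K}\subseteq\fg\ot_k S$, the natural candidate is
$$J=J(\cI)=\{\, s\in S \ :\ \fg\ot_k s\subseteq\mathfrak{K}\,\},$$
which is visibly an ideal of $S$ and visibly the largest ideal with $\fg\ot_k J\subseteq\mathfrak{K}$; uniqueness is then immediate once we show equality $\fg\ot_k J=\mathfrak{K}$. The inclusion $\fg\ot_k J\subseteq\mathfrak{K}$ holds by construction. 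For the reverse inclusion, the key point is that $S$ can be recovered from $\fg$ by a centralizer/multiplier argument: because $\fg$ is central simple, the multiplication algebra $M(\fg)$ (the subalgebra of $\End_k(\fg)$ generated by left and right multiplications) is all of $\End_k(\fg)$, so $M(\fg\ot_k S)=\End_k(\fg)\ot_k S$ acts on $\fg\ot_k S$, and an $S$-ideal is stable under this action. Writing an element of $\mathfrak{K}$ as $\sum_i e_i\ot s_i$ and hitting it with operators $\theta\ot 1$ for $\theta\in\End_k(\fg)$ chosen so that $\theta(e_i)$ runs over a basis, one extracts each $e_j\ot s_i\in\mathfrak{K}$, whence $\fg\ot_k s_i\subseteq\mathfrak{K}$, i.e.\ $s_i\in J$ for every $i$. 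Thus $\mathfrak{K}\subseteq\fg\ot_k J$, giving equality.

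I expect the main obstacle to be the verification that every $S$-ideal of $\fg\ot_k S$ has the claimed shape — in particular, making precise the step that an $S$-submodule closed under multiplication by $\fg\ot_k S$ is automatically closed under the full operator algebra $\End_k(\fg)\ot_k S$. This rests squarely on the central simplicity of $\fg$ (equivalently, $\fg\ot_k S$ being an Azumaya $S$-algebra, so that its two-sided ideals correspond to ideals of the center $S$); if $\fg$ were merely simple but not central, or not simple, the statement could fail. Once this structural fact is in hand, exactness of $-\ot_R S$ and the explicit description of $\mu$ make everything else routine, and uniqueness of $J$ follows from the maximality built into its definition together with the equality just established.
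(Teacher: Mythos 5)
Your argument is correct and is essentially the paper's proof: the fact that the multiplication algebra of a finite-dimensional central simple algebra is all of $\End_k(\fg)$ is exactly the Jacobson Density Theorem step the paper uses to show $\mu(\cI\ot_RS)=\fg\ot_k J$, and your $J$ coincides with the paper's coefficient ideal. The only cosmetic differences are that you define $J$ as the largest ideal with $\fg\ot_k J$ inside the image rather than as the set of coefficients, and your parenthetical appeal to Azumaya algebras is only literally apt in the associative case, whereas the density/multiplication-algebra argument covers the general central simple (e.g.\ Lie) setting, as in the paper.
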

\proof Fix a $k$-basis $\{x_1,\ldots,x_m\}$ of $\fg$.  Let $J=J(\cI)$ be the set of all $s\in S$ for which there exists $\sum_{i=1}^mx_i\ot s_i\in\mu(\cI\ot_RS)$ such that $s=s_i$ for some $i$.  By the definition of $J$, it is clear that $\mu(\cI\ot_RS)\subseteq \fg\ot_k J$.  Moreover, since $\fg\ot 1\subseteq\fg\ot_k S$ is a finite-dimensional central simple $k$-algebra, it follows from the Jacobson Density Theorem that $x_i\ot s\in\mu(\cI\ot_RS)$ for all $s\in J$ and for all $i\leq m$.  Thus $\fg\ot_k J \subseteq \mu(\cI\ot_RS)$.  The uniqueness of $J$ is clear since the tensor product $\fg\ot_k J$ is being taken over a field $k$.\qed

\bigskip

\begin{proposition}\label{inclusionprop}
Let $\cI_1$ and $\cI_2$ be $R$-ideals of $\cL$.  Then $J(\cI_1)\subseteq J(\cI_2)$ if and only if $\cI_1\subseteq\cI_2$.  In particular, the map
$J:\ \{R\hbox{-ideals of\ }\cL\}\rightarrow\{\hbox{ideals of\ }S\}$
is injective.
\end{proposition}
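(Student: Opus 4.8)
The plan is to transport the containment question through the isomorphism $\mu$, under which ideals of $\cL\ot_RS$ become the linear-algebraic objects $\fg\ot_kJ$, and then to descend back to $\cL$ using faithful flatness of $S/R$.

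First I would dispose of the easy implication. If $\cI_1\subseteq\cI_2$, then since $S$ is flat over $R$ the canonical maps fit together to give an inclusion $\cI_1\ot_RS\subseteq\cI_2\ot_RS$ inside $\cL\ot_RS$; applying $\mu$ and invoking Lemma~\ref{existenceofJ} yields $\fg\ot_kJ(\cI_1)\subseteq\fg\ot_kJ(\cI_2)$. Because these tensor products are taken over the field $k$, fixing a nonzero $x\in\fg$ and decomposing $S$ as $J(\cI_2)$ plus a $k$-linear complement shows that $x\ot s\in\fg\ot_kJ(\cI_2)$ forces $s\in J(\cI_2)$; hence $J(\cI_1)\subseteq J(\cI_2)$.

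For the converse I would run this chain in reverse: $J(\cI_1)\subseteq J(\cI_2)$ gives $\fg\ot_kJ(\cI_1)\subseteq\fg\ot_kJ(\cI_2)$, hence $\mu(\cI_1\ot_RS)\subseteq\mu(\cI_2\ot_RS)$, and, since $\mu$ is an isomorphism, $\cI_1\ot_RS\subseteq\cI_2\ot_RS$ inside $\cL\ot_RS$. The one substantive point — and the step I expect to be the main obstacle — is to deduce $\cI_1\subseteq\cI_2$ from this inclusion of modules obtained after tensoring. I would handle it by the faithful-flatness fact that for any $R$-submodule $N$ of an $R$-module $M$ one has $M\cap(N\ot_RS)=N$, once $M$ is identified with its image in $M\ot_RS$ via Lemma~\ref{basiclemma}(1). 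This in turn follows by noting that $M/N\hookrightarrow (M/N)\ot_RS\cong (M\ot_RS)/(N\ot_RS)$ (again Lemma~\ref{basiclemma}(1)), so that the kernel of the composite $M\to (M\ot_RS)/(N\ot_RS)$ equals $N$; but that kernel is also $M\cap(N\ot_RS)$. Taking $M=\cL$ and $N=\cI_i$ gives $\cI_i=\cL\cap(\cI_i\ot_RS)$, whence $\cI_1=\cL\cap(\cI_1\ot_RS)\subseteq\cL\cap(\cI_2\ot_RS)=\cI_2$.

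The injectivity of $J$ then follows at once: $J(\cI_1)=J(\cI_2)$ forces both $\cI_1\subseteq\cI_2$ and $\cI_2\subseteq\cI_1$. Apart from the descent step just described, the argument is purely formal manipulation of tensor products over a field together with the already-recorded fact that $\mu$ is an $S$-algebra isomorphism carrying $\cI\ot_RS$ onto $\fg\ot_kJ(\cI)$.
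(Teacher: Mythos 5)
Your proof is correct, but it takes a somewhat different route from the paper's in how faithful flatness is used. You argue the two implications separately: the forward one by a direct linear-algebra argument over $k$ (which could also be read off from the uniqueness statement in Lemma \ref{existenceofJ}), and the backward one by pulling the inclusion $\fg\ot_k J(\cI_1)\subseteq\fg\ot_k J(\cI_2)$ back through $\mu$ to get $\cI_1\ot_RS\subseteq\cI_2\ot_RS$ inside $\cL\ot_RS$, and then descending via the identity $N=M\cap(N\ot_RS)$, which you derive from Lemma \ref{basiclemma}(1) applied to $M/N$ --- in effect extending Lemma \ref{basiclemma}(2) from ideals of $R$ to submodules of arbitrary $R$-modules, applied with $M=\cL$ and $N=\cI_i$. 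The paper instead sets $\cI=\cI_1+\cI_2$, notes $J(\cI)=J(\cI_1)+J(\cI_2)$, and identifies $(\cI/\cI_2)\ot_RS$ with $\fg\ot_k\big(J(\cI)/J(\cI_2)\big)$, so that both directions follow simultaneously from the ``tensoring with a faithfully flat algebra detects zero modules'' form of faithful flatness. The two arguments are equivalent in substance: yours is slightly more self-contained, since the descent identity is proved from the stated Lemma \ref{basiclemma}(1) rather than invoked, whereas the paper's sum-of-ideals device treats both implications symmetrically and records the useful fact $J(\cI_1+\cI_2)=J(\cI_1)+J(\cI_2)$ along the way.
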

\proof Let $\cI=\cI_1+\cI_2$.  The restriction of the multiplication map
$$\mu:\ \cL\ot_RS\rightarrow\fg(S)$$
to $\cI\ot_RS$ gives an isomorphism
$$\mu_\cI:\ \cI\ot_RS\rightarrow\fg\ot_k J(\cI)$$
with $J(\cI)=J(\cI_1)+J(\cI_2)$.  By flatness of $S/R$,
$$(\cI/\cI_2)\ot_RS\simeq\frac{\cI\ot_RS}{\cI_2\ot_RS}$$
as $S$-modules.  The injection $\mu_\cI$ restricts to an isomorphism
$$\cI_2\ot_RS\rightarrow\fg\ot_k J(\cI_2),$$
so we see that
$$\frac{\cI\ot_RS}{\cI_2\ot_RS}\simeq\frac{\fg\ot_k J(\cI)}{\fg\ot_k J(\cI_2)}=\fg\ot_k (J(\cI)/J(\cI_2)).$$
Thus $(\cI/\cI_2)\ot_RS=0$ if and only if $\fg\ot_k (J(\cI)/J(\cI_2))=0$; then by faithful flatness, $\cI/\cI_2=0$ if and only if $J(\cI)/J(\cI_2)=0$.  That is, $\cI_1\subseteq\cI_2$ if and only if $J(\cI_1)\subseteq J(\cI_2)$.\qed

\bigskip

\begin{proposition}\label{Gammastable} Let $\cI\subseteq\cL$ be an $R$-ideal.  Then $J(\cI)$ is stable under the action of the Galois group $\Gamma=\Gal(S/R)$.
\end{proposition}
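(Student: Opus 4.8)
The plan is to combine the description $\cL\simeq\cL_u=\{z\in\fg(S)\mid u_\gamma\,{}^\gamma z=z\text{ for all }\gamma\in\Gamma\}$ with the explicit form \eqref{mult} of the multiplication map. The first observation is that, for an $R$-ideal $\cI$ of $\cL$, the image $\mu(\cI\ot_RS)$ is simply the $S$-submodule $S\cdot\cI$ of $\fg(S)$ generated by $\cI$: by \eqref{mult} one has $\mu(c\ot s)=s\cdot c$ for $c\in\cI$ and $s\in S$, where $s$ acts on $\fg(S)=\fg\ot_kS$ through the second tensor factor, so the image is spanned by these products.

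Fix $\gamma\in\Gamma$, write $\sigma_\gamma$ for the $\gamma$-semilinear $k$-algebra automorphism $x\ot s\mapsto x\ot{}^\gamma s$ of $\fg(S)$ (so that ${}^\gamma z=\sigma_\gamma(z)$), and set $\tau_\gamma=u_\gamma\circ\sigma_\gamma$. Since $u_\gamma$ is an $S$-algebra automorphism of $\fg(S)$, each $\tau_\gamma$ is again a $\gamma$-semilinear $k$-algebra automorphism of $\fg(S)$, and by construction $\cL=\cL_u$ is exactly the common fixed-point set of all the $\tau_\gamma$; in particular $\tau_\gamma$ fixes $\cI$ pointwise. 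I would then show that $\tau_\gamma$ maps $S\cdot\cI$ onto itself: for $c\in\cI$ and $s\in S$,
$$\tau_\gamma(s\cdot c)=({}^\gamma s)\cdot\tau_\gamma(c)=({}^\gamma s)\cdot c,$$
and since $\gamma$ permutes $S$, the elements $\sum_j({}^\gamma s_j)\,c_j$ run over all of $S\cdot\cI$ as the $s_j\in S$ do; hence $\tau_\gamma(\mu(\cI\ot_RS))=\tau_\gamma(S\cdot\cI)=S\cdot\cI=\mu(\cI\ot_RS)$.

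It remains to translate this into $\Gamma$-stability of $J(\cI)$. Here I would use the elementary facts that, for every ideal $J'$ of $S$, $\sigma_\gamma(\fg\ot_kJ')=\fg\ot_k{}^\gamma J'$ while $u_\gamma(\fg\ot_kJ')=\fg\ot_kJ'$ — the latter because $\fg\ot_kJ'$ coincides with the $S$-submodule $J'\cdot\fg(S)$ and $u_\gamma$ is surjective and $S$-linear. By Lemma \ref{existenceofJ}, $\mu(\cI\ot_RS)=\fg\ot_kJ(\cI)$, so the identity of the previous paragraph becomes $u_\gamma\big(\fg\ot_k{}^\gamma J(\cI)\big)=\fg\ot_kJ(\cI)$, and therefore $\fg\ot_k{}^\gamma J(\cI)=\fg\ot_kJ(\cI)$. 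As the tensor product is taken over the field $k$, this forces ${}^\gamma J(\cI)=J(\cI)$, and since $\gamma\in\Gamma$ was arbitrary, $J(\cI)$ is $\Gamma$-stable.

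The argument is short, and the only real point to be careful about is the bookkeeping of semilinearity: although $\tau_\gamma$ fixes $\cI$ pointwise, it does \emph{not} fix the $S$-submodule $S\cdot\cI$ pointwise — it merely permutes it, precisely because $\gamma$ permutes the scalars $S$ — and one must keep track of which of the automorphisms $u_\gamma$, $\sigma_\gamma$, $\tau_\gamma$ twists the ideal $J'\subseteq S$ and which leaves it fixed. Beyond this I do not anticipate any substantive obstacle.
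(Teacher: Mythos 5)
Your proof is correct; there is no gap. It is, however, a coordinate-free variant of the paper's argument rather than a reproduction of it. The paper fixes a $k$-basis $\{x_1,\dots,x_m\}$ of $\fg$, identifies $J(\cI)$ as the ideal generated by the coordinates of elements of $\cI$, writes the matrix $A$ of $u_\gamma$ in that basis, and extracts from the fixed-point equation $u_\gamma\,{}^\gamma z=z$ the matrix identity expressing the ${}^\gamma s_i$ as $S$-linear combinations (via $(A^t)^{-1}$) of the $s_i$, whence ${}^\gamma s_i\in J(\cI)$ because $J(\cI)$ is an ideal. You instead stay at the level of $S$-modules: the observations that $\mu(\cI\ot_RS)=S\cdot\cI$, that the $\gamma$-semilinear automorphisms $\tau_\gamma=u_\gamma\circ\sigma_\gamma$ cutting out $\cL_u$ fix $\cI$ pointwise and hence permute $S\cdot\cI$, and that an $S$-linear automorphism such as $u_\gamma$ maps $\fg\ot_kJ'=J'\cdot\fg(S)$ onto itself for every ideal $J'\subseteq S$, together force $\fg\ot_k{}^\gamma J(\cI)=\fg\ot_kJ(\cI)$ and so ${}^\gamma J(\cI)=J(\cI)$. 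The underlying mechanism is identical in both proofs --- the cocycle description of $\cL$ plus the $S$-linearity and invertibility of $u_\gamma$ (the paper's invertible matrix $A$ is just the coordinate form of the latter) --- but your version dispenses with the basis and matrix bookkeeping and makes explicit exactly where semilinearity versus $S$-linearity is used, at the modest cost of invoking the identification $\fg\ot_kJ'=J'\cdot\fg(S)$, while the paper's computation is more elementary and self-contained.
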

\proof As in the proof of Lemma \ref{existenceofJ}, we fix a $k$-basis $\beta=\{x_1,\ldots,x_m\}$ of $\fg$.  From the definition of $J=J(\cI)$, it is easy to see that $J$ is the ideal of $S$ generated by the set $E_\beta(\cI)$ of those elements $s\in S$ for which there is an element $\sum_i x_i\ot s_i\in\cI$ for which $s_i=s$ for some $i$.  It is thus enough to show $^\gamma s\in J$ for all $\gamma\in\Gamma$ and $s\in E_\beta(\cI)$.

Let $u\in Z^1(\Gamma,\Aut_{S\hbox{-}alg}(\fg(S)))$ be a cocycle corresponding to the $S/R$-form $\cL$.  Fix $\gamma\in\Gamma$, and write $u_\gamma(x_i\ot 1)=\sum_{j=1}^m x_j\ot a_{ij}$.  Since $u_\gamma$ is an automorphism of $\fg(S)$, the matrix $A=(a_{ij})$ is invertible in $M_m(S)$.  Let $z=\sum x_i\ot s_i\in\cI$.  It suffices to show that $^\gamma s_i\in J$ for $i=1,\ldots,m$.  We have
\begin{align*}
\sum x_i\ot s_i&=\mu(z\ot 1)\\
&=\mu(u_\gamma{}^\gamma z\ot 1)\\
&=\mu\left(\sum_i u_\gamma(x_i\ot{}^\gamma s_i)\ot 1\right)\\
&=\mu\left(\sum_i\, ^\gamma s_i u_\gamma(x_i\ot 1)\ot 1\right)\\
&=\mu\left(\sum_i u_\gamma(x_i\ot 1)\ot  {^\gamma} s_i\right)\\
&=\mu\left(\sum_{i,j} x_j\ot a_{ij}\ot {^\gamma} s_i\right)\\
&=\sum_jx_j\ot\left(\sum_i a_{ij}{}^\gamma s_i\right).
\end{align*}
In matrix form, we see that
$$\left(\begin{array}{c}
^\gamma s_1\\
\vdots\\
^\gamma s_m
\end{array}\right)=(A^t)^{-1}\left(\begin{array}{c}
s_1\\
\vdots\\
s_m
\end{array}\right).$$
By definition, $s_i\in E_\beta(\cI)\subseteq J$ for all $i$, and $(A^t)^{-1}\in M_m(S)$.  Hence $^\gamma s_i\in J$ for all $i$.\qed

\bigskip

\begin{lemma}\label{correspidealofIL}
Let $I$ be an ideal of $R$.  Then $I\cL$ is an ideal of $\cL$, and $J(I\cL)=IS$.
\end{lemma}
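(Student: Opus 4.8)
The plan is to verify that $I\cL$ is an $R$-ideal of $\cL$ and then compute $J(I\cL)$ directly from the characterization of $J$ given in Lemma \ref{existenceofJ}, exploiting the fact that $\mu$ is $S$-linear.

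First I would note that $I\cL$, being the image of the $R$-submodule $I\ot_R\cL$ under multiplication, is an $R$-submodule of $\cL$; since $I$ is an ideal of $R$ and $\cL$ is an $R$-algebra, the product of $I\cL$ with any element of $\cL$ lands again in $I\cL$, so $I\cL$ is a (two-sided) $R$-ideal. Hence Lemma \ref{existenceofJ} applies and $J(I\cL)$ is the unique ideal of $S$ with $\fg\ot_k J(I\cL)=\mu\big((I\cL)\ot_R S\big)$.

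Next I would compute $\mu\big((I\cL)\ot_R S\big)$. Because $\mu$ is $S$-linear and $(I\cL)\ot_R S$ is spanned over $S$ by elements $(iz)\ot 1$ with $i\in I$, $z\in\cL$, we get $\mu\big((I\cL)\ot_R S\big)=S\cdot\mu\big((I\cL)\ot 1\big)$. Viewing $\cL\subseteq\fg\ot_k S$, the element $iz$ for $z=\sum_l x_l\ot s_l$ is $\sum_l x_l\ot (is_l)$, and as $z$ ranges over $\cL$ and $i$ over $I$, these elements, together with their $S$-span, produce exactly $\fg\ot_k(I\cdot S\cdot\{s_l\})$. More precisely, one shows the inclusion $\mu\big((I\cL)\ot_R S\big)\subseteq \fg\ot_k IS$: every $s_l$ occurring in an element of $\cL$ lies in $S$, so $is_l\in IS$ and, after multiplying by arbitrary elements of $S$, we stay in $IS$. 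For the reverse inclusion, using Lemma \ref{existenceofJ} (and the Jacobson density argument therein) one knows that $\mu(\cL\ot_R S)=\fg\ot_k S$, so for each $x_l$ and each $s\in S$ there is an element of $\cL\ot_R S$ mapping to $x_l\ot s$; multiplying inside $\cL\ot_R S$ by $i\in I$ on the $\cL$-factor and by $s'\in S$ on the $S$-factor shows $x_l\ot iss'\in\mu\big((I\cL)\ot_R S\big)$, and such elements span $\fg\ot_k IS$ over $k$. Combining the two inclusions gives $\mu\big((I\cL)\ot_R S\big)=\fg\ot_k IS$, and by the uniqueness clause of Lemma \ref{existenceofJ} we conclude $J(I\cL)=IS$.

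The main obstacle is bookkeeping the reverse inclusion cleanly: one must pass from knowing that $\mu(\cL\ot_R S)=\fg\ot_k S$ to controlling $\mu\big((I\cL)\ot_R S\big)$ without accidentally enlarging the $\fg$-component. The cleanest way is to avoid bases and argue structurally: $(I\cL)\ot_R S$ is the image of $I\ot_R(\cL\ot_R S)$ under the $R$-module map induced by multiplication $I\ot_R\cL\to\cL$, and $\mu$ intertwines this with multiplication by $I$ on the $S$-factor of $\fg\ot_k S=\mu(\cL\ot_R S)$; since $I\cdot S=IS$ inside $S$ (Lemma \ref{basiclemma}(2)), this immediately yields $\mu\big((I\cL)\ot_R S\big)=\fg\ot_k IS$. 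Everything else is formal, and the uniqueness in Lemma \ref{existenceofJ} finishes the proof.
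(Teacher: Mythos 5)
Your proof is correct and is essentially the paper's argument: the paper simply writes the chain of $S$-module isomorphisms $I\cL\ot_R S=\cL\ot_R IS\simeq\fg\ot_k S\ot_S IS\simeq\fg\ot_k IS$ and invokes the uniqueness of $J$, which is exactly the ``structural'' version you give in your final paragraph. Your two-inclusion computation is a more explicit rendering of the same fact that $\mu$ is $S$-linear and carries $I\cdot(\cL\ot_R S)$ onto $I\cdot\fg(S)=\fg\ot_k IS$.
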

\proof It is obvious that $I\cL$ is an ideal of $\cL$.  As $S$-modules (in fact, as $S$-algebras),
\begin{align*}
I\cL\ot_RS&=\cL\ot_RIS\\
&\simeq \cL\ot_RS\ot_SIS\\
&\simeq\fg\ot_k S\ot_SIS\\
&\simeq\fg\ot_k IS,
\end{align*}
so $J(I\cL)=IS$.\qed

\bigskip

We now turn to the classification of maximal $k$-ideals $\cI$ of the $S/R$-form $\cL$.

\begin{lemma}\label{maximal}
The sets of maximal $k$-ideals and maximal $R$-ideals of $\cL$ coincide.
\end{lemma}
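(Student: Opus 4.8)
The plan is to prove the two containments separately. Since every maximal $R$-ideal of $\cL$ is in particular a $k$-ideal (it is a proper subspace, and hence contained in some maximal $k$-ideal, so it suffices to show it is itself a maximal $k$-ideal), the real content is showing that every maximal $k$-ideal is an $R$-ideal. **First I would** take a maximal $k$-ideal $\cI\subseteq\cL$ and consider, for a fixed $r\in R$, the set $r\cI=\{rz : z\in\cI\}$. Because $R$ is central in $\cL$ (multiplication by $r$ commutes with the algebra multiplication, as $(x\ot s)(y\ot t)=xy\ot st$), the subspace $\cI+r\cI$ is again a $k$-ideal of $\cL$.

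**Next**, I would argue that $\cI+r\cI=\cI$. By maximality of $\cI$ as a $k$-ideal, either $\cI+r\cI=\cI$ (which is what we want) or $\cI+r\cI=\cL$. In the latter case I need a contradiction; the natural route is to pass to the quotient $\cL/\cI$, which is a simple $k$-algebra (being the quotient of an algebra by a maximal two-sided ideal — here I would want $\cL/\cI$ to actually be simple as a ring, which holds since $\cL$ is unital, or more carefully since $\fg$ is unital central simple so $\cL$ inherits a unit). In $\cL/\cI$, the image $\bar r$ of $r$ is central, and $\bar r(\cL/\cI)=\cL/\cI$ would force $\bar r$ to be a unit in the center $Z(\cL/\cI)$. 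The point is that $\bar r$ acting by left multiplication on the simple algebra $\cL/\cI$ is either $0$ or surjective; if surjective it is invertible, and then $r\cI\subseteq\cI$ already holds trivially (write $z=r\cdot r^{-1}z$, but $r^{-1}z$ need not lie in $\cL$...). **The hard part will be** exactly this last point: controlling $r\cdot(-)$ on $\cL$ itself rather than on the quotient, since $R$-ideals require closure under multiplication by \emph{all} of $R$ and the inverse of $\bar r$ in $Z(\cL/\cI)$ may not lift to $R$.

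**To get around this**, the cleaner approach is: let $\cI$ be a maximal $k$-ideal, set $\mathfrak{p}=\cI\cap R$, which is a prime ideal of $R$ (as $R/\mathfrak{p}$ embeds in the center of the simple algebra $\cL/\cI$, hence is a domain), and observe that $\cL/\cI$ is then a central simple algebra over the field of fractions of $R/\mathfrak{p}$, in fact finite-dimensional over it since $\cL$ is a finitely generated module over $R$ (as $\fg$ is finite-dimensional, $\fg(R)$ is $R$-finite, and $\cL$ is an $R$-form of it, so $\cL\ot_R S\cong\fg\ot_k S$ is $S$-finite, whence $\cL$ is $R$-finite by faithfully flat descent). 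Then I would show $\cI\supseteq\mathfrak{p}\cL$: indeed $\mathfrak{p}\cL$ is an $R$-ideal contained in $\cI$, and the quotient $\cL/\mathfrak{p}\cL$ is a finite-dimensional algebra over the field $R_{\mathfrak p}/\mathfrak{p}R_{\mathfrak p}$ after localizing — more simply, $\cL/\mathfrak{p}\cL$ is a twisted form over $\mathrm{Frac}(R/\mathfrak p)$ of a central simple algebra, hence simple as a ring once we invert $R/\mathfrak p$, and the image of $\cI$ in it is a two-sided ideal not meeting the central subring $R/\mathfrak p\setminus\{0\}$... **and here** I would invoke that a maximal $k$-ideal restricted to a simple $k$-algebra that is finite over a central Noetherian domain must already be an ideal over that domain, using Lemma \ref{basiclemma} and the fact established in Lemma \ref{existenceofJ}. **The main obstacle**, then, is the descent step showing $\cL$ is module-finite over $R$ and the verification that a maximal $k$-ideal cannot strictly contain a maximal $R$-ideal; once $\cL$ is known to be $R$-finite, $\cL/\mathfrak p\cL$ is an Artinian ring over $R/\mathfrak p$, its $k$-simple quotient $\cL/\cI$ is then finite-dimensional over $\mathrm{Frac}(R/\mathfrak p)$, and a dimension count forces $\mathfrak p$ to be maximal in $R$ and $\cI=\mathfrak p\cL$, which is manifestly an $R$-ideal.
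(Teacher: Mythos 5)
Your opening move is the same as the paper's: for a maximal $k$-ideal $\cI$ and $r\in R$, note that $r\cI$ and $\cI+r\cI$ are $k$-ideals, so by maximality either $r\cI\subseteq\cI$ or $\cI+r\cI=\cL$. But the way you try to derive a contradiction in the second case is where the argument breaks. You pass to $\cL/\cI$ as a \emph{simple unital ring} whose center contains the image of $r$, asserting that ``$\cL$ is unital, or more carefully $\fg$ is unital central simple so $\cL$ inherits a unit.'' This is false in the setting of the paper: $\fg$ is an arbitrary finite-dimensional central simple $k$-algebra, and the principal case of interest is $\fg$ a simple Lie algebra, which has no unit. Then $R$ does not embed in $\cL$, expressions like $\cI\cap R$, ``the image $\bar r$ of $r$ in $Z(\cL/\cI)$,'' and ``$\cL/\cI$ is a central simple algebra over $\mathrm{Frac}(R/\mathfrak p)$'' have no meaning (one would need the centroid, which you never invoke), and the whole second half of your plan --- module-finiteness over $R$, Artinian quotients, the claim that ``a maximal $k$-ideal ... finite over a central Noetherian domain must already be an ideal over that domain,'' and the final ``dimension count forces $\mathfrak p$ maximal and $\cI=\mathfrak p\cL$'' --- is either undefined or unproved in this generality; Lemmas \ref{basiclemma} and \ref{existenceofJ} do not supply those steps, and the last claim is essentially Theorem \ref{bijection}, which comes \emph{after} this lemma and uses it.

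The missing idea is perfectness: $\fg\fg=\fg$ since $\fg$ is central simple, and by descent (faithful flatness of $S/R$, as noted in the paper via \cite{GP}) $\cL=\cL\cL$. With that, the second horn dies in one line: if $\cI+r\cI=\cL$, then
\begin{equation*}
\cL=\cL\cL=(\cI+r\cI)\cL=\cI\cL+\cI(r\cL)\subseteq\cI\cL\subseteq\cI,
\end{equation*}
using only that $\cL$ is an $R$-algebra (so $r\cL\subseteq\cL$ and $r$ slides across products), contradicting $\cI\subsetneq\cL$. Hence every maximal $k$-ideal is $R$-stable, and the two notions of maximal ideal coincide. One further caution about your reduction of the converse direction: you justify ``every maximal $R$-ideal is a maximal $k$-ideal'' by embedding it in some maximal $k$-ideal, but in a non-unital algebra the existence of a maximal ideal containing a given proper ideal is not automatic (Zorn's lemma needs the union of a chain of proper ideals to be proper, which can fail without a unit), so that step would also need an argument; the perfectness route avoids it.
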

\begin{proof} Let $\cI$ be a maximal $k$-ideal of $\cL.$ We claim that $\cI$ is stable under the action of $R.$ For any $r\in R$, the space $r\cI$ is clearly a $k$-ideal of $\cL$, and if $r\cI\not\subseteq\cI$, then $\cI+r\cI=\cL$ by the maximality of $\cI$.  The algebra $\cL$ is perfect by descent considerations, as has already been noted in \cite{GP}, for instance.  Thus
\begin{align*}
\cL=\cL\cL&=(\cI+r\cI)\cL\\
&=\cI\cL+\cI  (r\cL)\\
&\subseteq \cI\cL\\
&\subseteq \cI,
\end{align*}
since  $\cL$ is an $R$-algebra.  But this contradicts the proper inclusion $\cI\subsetneq\cL$, so $r\cI\subseteq \cI$ as claimed.  From this, it follows that every maximal $k$-ideal of $\cL$ is also a maximal $R$-ideal of $\cL$ and conversely. \qed
\end{proof}

\begin{lemma}\label{MSintersectionlem}
Let $M$ be a maximal ideal of $R$.  Then:
\begin{enumerate}
\item[{\rm(1)}]  There exist prime ideals of $S$ lying over $M,$ and any such ideal is maximal. The group $\Gamma$ acts transitively on the set of such maximal ideals. In particular, this set is finite.

\item[{\rm (2)}]
$$MS=\bigcap_i M_i$$
where the intersection is taken over the (finite) set of maximal ideals of $S$ lying over $M.$
\end{enumerate}

\end{lemma}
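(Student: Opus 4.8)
The plan is to reduce everything to standard commutative algebra facts about integral extensions. First I would observe that since $S/R$ is a finite Galois extension in $\kalg$, it is in particular a finite (hence integral) ring extension, and $\Gamma = \Gal(S/R)$ acts on $S$ with fixed ring $S^\Gamma = R$ (this is the definition of Galois extension of rings being used here, together with Lemma \ref{basiclemma}(1) identifying $R$ inside $S$). For part (1), the existence of a prime of $S$ lying over the given maximal ideal $M$ is the Lying Over theorem for integral extensions; that such a prime $\mathfrak{q}$ is maximal follows from the fact that in an integral extension, $\mathfrak{q}$ is maximal in $S$ if and only if $\mathfrak{q}\cap R$ is maximal in $R$ (one checks $S/\mathfrak{q}$ is integral over the field $R/M$, and an integral domain integral over a field is a field). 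For transitivity of the $\Gamma$-action on the fiber over $M$, I would invoke the classical theorem (e.g. Bourbaki, \emph{Commutative Algebra}, Ch. V, or \cite{matsumura}) that if $S$ is integral over $R$ and $S$ is, say, the integral closure situation with a group $\Gamma$ acting with $S^\Gamma = R$, then $\Gamma$ acts transitively on the primes of $S$ over any given prime of $R$; the quick self-contained argument is: if $\mathfrak{q}_1,\mathfrak{q}_2$ both lie over $M$ but $\mathfrak{q}_2 \notin \{\,^\gamma\mathfrak{q}_1 : \gamma\in\Gamma\}$, then by prime avoidance pick $x\in\mathfrak{q}_2$ with $x\notin {^\gamma\mathfrak{q}_1}$ for all $\gamma$; then $N := \prod_{\gamma\in\Gamma}{^\gamma x}\in S^\Gamma = R$ lies in $\mathfrak{q}_2\cap R = M \subseteq \mathfrak{q}_1$, forcing some $^\gamma x\in\mathfrak{q}_1$, i.e. $x\in{^{\gamma^{-1}}\mathfrak{q}_1}$, a contradiction. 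Finiteness of the fiber is then immediate since it is a single $\Gamma$-orbit and $\Gamma$ is finite.

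For part (2), set $\{M_1,\dots,M_r\}$ to be the (finite, by part (1)) set of maximal ideals of $S$ over $M$, and let $\mathfrak{a} = \bigcap_i M_i$. The inclusion $MS \subseteq \mathfrak{a}$ is clear since each $M_i$ lies over $M$, hence contains $M$, hence contains $MS$. For the reverse inclusion I would argue locally: by Lemma \ref{basiclemma}(2), $R \cap MS = M$, so $R/M \hookrightarrow S/MS$ exhibits $\bar S := S/MS$ as a finite algebra over the field $\bar R := R/M$; moreover the primes of $\bar S$ are exactly the images $\bar M_i$ of the $M_i$ (these are precisely the primes of $S$ containing $MS$, and any prime of $S$ containing $MS$ lies over a prime of $R$ containing $M$, i.e. over $M$ itself). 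Thus $\bar S$ is a finite-dimensional $\bar R$-algebra whose only primes are the finitely many maximal ideals $\bar M_i$, so it is Artinian with Jacobson radical $\bigcap_i \bar M_i = \mathfrak{a}/MS$; and since $\bar S$ is a finite product of its localizations at the $\bar M_i$, it suffices to show each such local Artinian factor is reduced, equivalently that $\mathfrak{a}/MS = 0$. Here the Galois structure is essential: because $\Gamma$ permutes the $M_i$ transitively, $\mathfrak{a}$ is $\Gamma$-stable, and one can show $\bar S$ is in fact a \emph{separable} (étale) $\bar R$-algebra — this is exactly the statement that a Galois extension of rings stays Galois (hence étale, hence reduced) after base change to $R/M$ — so $\bar S$ has zero nilradical, giving $\mathfrak{a} = MS$.

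I expect the main obstacle to be part (2), specifically justifying that $S/MS$ is reduced. The cheap route is to cite a standard reference on Galois extensions of commutative rings (e.g. Knus--Ojanguren \cite{KnusOjanguren}, already in the bibliography, or Auslander--Goldman / DeMeyer--Ingraham): one of the equivalent defining conditions for $S/R$ being $\Gamma$-Galois is that the map $S\otimes_R S \to \prod_{\gamma\in\Gamma} S$, $s\otimes t \mapsto (s\cdot{^\gamma t})_\gamma$, is an isomorphism, and this property is stable under arbitrary base change $R\to R/M$; combined with $\#\Gamma$ being invertible-or-not it still forces $S/MS$ to be a $\Gamma$-Galois $R/M$-algebra over a field, hence a finite product of finite separable field extensions of $R/M$, hence reduced, and its maximal ideals are exactly the $\bar M_i$ — which simultaneously recovers transitivity and gives $\bigcap_i M_i = MS$. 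If one prefers to avoid invoking Galois-descent machinery, the alternative is a direct CRT argument: once transitivity is known, fix $M_1$ and write $M_i = {^{\gamma_i}M_1}$; pick $y\in MS$-complement appropriately and use that $\prod_\gamma {^\gamma(\cdot)}$ lands in $R$ to show no nonzero element of $\bigcap M_i$ can avoid $MS$, which is essentially the same norm computation as in part (1). Either way, the commutative-algebra content is routine; the only care needed is to keep the $\Gamma$-equivariance visible so that part (2) meshes with part (1).
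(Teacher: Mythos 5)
Your proposal is correct and follows essentially the same route as the paper: part (1) via integrality of $S$ over $R=S^\Gamma$ (lying over, maximality in fibres, transitivity of $\Gamma$), and part (2) by observing that $S/MS\simeq (R/M)\otimes_R S$ is again a Galois, hence \'etale, extension of the field $R/M$, so it is a product of finite separable field extensions, has trivial Jacobson radical, and therefore $MS=\bigcap_i M_i$. The only difference is cosmetic: you spell out the norm/prime-avoidance argument for transitivity where the paper simply cites Bourbaki.
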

\proof (1) This is well known, but we recall the main ideas for completeness. From basic properties of Galois extensions, we know that  $R = S^\Gamma,$ and hence $S/R$ is integral. From this it follows that the set of prime ideals of $S$ lying over $M$ is not empty, that any such ideal is maximal, and that the action of $\Gamma$ on this set is transitive.  (See \cite[\S2.1 proposition 1 and \S2.2 th\'eor\`eme 2]{CA}.)

(2) Any maximal ideal $\mathfrak{m}$ of $S$ containing $MS$ will lie over $M$, since the intersection $\mathfrak{m}\cap R$ is a proper ideal of $R$ containing $MS\cap R$, which is equal to the maximal ideal $M$ by Lemma \ref{basiclemma}(2).  Thus $\mathfrak{m}=M_i$ for some $i$, and $\bigcap_i M_i$ is the radical of $MS$.  By standard base change arguments, 
$$S/MS\simeq (R/M)\ot_R S.$$
(See \cite[XVI, \S2, Proposition 7]{lang}, for instance.)

Let $L = R/M,$  a field extension of $k.$ Since the extension $S$ is Galois over $R$, general facts about base change guarantee that the extension $(R/M)\ot_R S$ is Galois over $(R/M)\ot_R R \simeq L$.  (See \cite[\S I.5]{milne}, for instance.)  That is, $S/MS$ is a Galois extension of $L$.  Galois extensions are finite \'etale and the only such extensions of $L$ are products $L_1\times\cdots\times L_m$ where the $L_i$ are finite separable field extensions of $L.$ We see from this that $S/MS$ has trivial Jacobson radical.  Hence $MS$ is a radical ideal of $S$, and  $MS=\bigcap_i M_i$.\qed

\bigskip

\begin{theorem}\label{bijection}
The map $\psi:\ I\longmapsto I\cL$ defines a bijection between the set of maximal ideals of $R$ and the set of maximal ideals of $\cL$.
\end{theorem}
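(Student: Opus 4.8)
The plan is to establish three things about $\psi\colon I\mapsto I\cL$: that it takes values among the maximal ideals of $\cL$, that it is injective, and that it is surjective. All three will come formally from the already-proven properties of the map $J$ (injectivity and monotonicity, Proposition~\ref{inclusionprop}; $\Gamma$-stability of its values, Proposition~\ref{Gammastable}; the computation $J(I\cL)=IS$, Lemma~\ref{correspidealofIL}), together with Lemma~\ref{MSintersectionlem} and the identification of maximal $k$-ideals with maximal $R$-ideals (Lemma~\ref{maximal}).

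First I would show that $I\cL$ is a maximal ideal whenever $I\in\Max(R)$. By Lemma~\ref{correspidealofIL}, $J(I\cL)=IS$, which is proper in $S$ because $IS\cap R=I\neq R$ (Lemma~\ref{basiclemma}(2)); so $I\cL$ is a proper $R$-ideal. The crucial observation is that almost no ideals of $S$ lie above $IS$: by Lemma~\ref{MSintersectionlem}, $IS=\bigcap_i M_i$ where $M_1,\dots,M_n$ are the maximal ideals of $S$ over $I$ and $\Gamma$ permutes them transitively, so $S/IS\simeq\prod_i S/M_i$ is a finite product of fields on which $\Gamma$ acts by a transitive permutation of the factors; hence the only $\Gamma$-stable ideals of $S$ containing $IS$ are $IS$ and $S$. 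Since $J$ is injective, inclusion-preserving, and sends every $R$-ideal to a $\Gamma$-stable ideal, the $R$-ideals of $\cL$ containing $I\cL$ are exactly $I\cL$ and $\cL$. Thus $I\cL$ is a maximal $R$-ideal, hence a maximal $k$-ideal by Lemma~\ref{maximal}. Injectivity of $\psi$ is then immediate: if $I_1\cL=I_2\cL$ then $I_1S=J(I_1\cL)=J(I_2\cL)=I_2S$, and intersecting with $R$ gives $I_1=I_2$ by Lemma~\ref{basiclemma}(2).

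For surjectivity, let $\cI$ be a maximal ideal of $\cL$; by Lemma~\ref{maximal} it is a maximal $R$-ideal, so $J(\cI)$ is defined, is $\Gamma$-stable, and is proper (else $J(\cI)=S=J(\cL)$ would force $\cI=\cL$). Choose a maximal ideal $\mathfrak{m}$ of $S$ with $J(\cI)\subseteq\mathfrak{m}$ and set $M=\mathfrak{m}\cap R$, a maximal ideal of $R$ because $S$ is integral over $R$ (as recalled in the proof of Lemma~\ref{MSintersectionlem}). For every $\gamma\in\Gamma$ the ideal ${}^\gamma\mathfrak{m}$ is maximal and still lies over $M$, so these ideals run through all the maximal ideals of $S$ over $M$ by Lemma~\ref{MSintersectionlem}(1); and since $J(\cI)$ is $\Gamma$-stable, $J(\cI)={}^\gamma J(\cI)\subseteq{}^\gamma\mathfrak{m}$ for each $\gamma$, whence $J(\cI)\subseteq\bigcap_\gamma{}^\gamma\mathfrak{m}=MS=J(M\cL)$ by Lemma~\ref{MSintersectionlem}(2) and Lemma~\ref{correspidealofIL}. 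Proposition~\ref{inclusionprop} then gives $\cI\subseteq M\cL$, and since $M\cL$ is a proper (indeed maximal) ideal by the previous step while $\cI$ is maximal, $\cI=M\cL=\psi(M)$.

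The one genuinely non-formal point is pinning down $J(\cI)$ for a maximal $\cI$: a priori this is just some proper $\Gamma$-stable ideal of $S$, and it need not visibly be an extended ideal $MS$. The argument above extracts exactly what is needed — that $\Gamma$-stability of $J(\cI)$, combined with transitivity of the Galois action on the fibre of $\Spec S\to\Spec R$ over any closed point (Lemma~\ref{MSintersectionlem}), forces $J(\cI)$ below $MS$ for a suitable $M\in\Max(R)$ — without ever having to identify $J(\cI)$ precisely. Everything else is bookkeeping with the descent dictionary already set up in this section.
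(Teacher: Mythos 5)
Your proof is correct, and the injectivity and surjectivity arguments are essentially the paper's own: injectivity via $J(I\cL)=IS$ and Lemma~\ref{basiclemma}(2), and surjectivity by choosing a maximal ideal $\mathfrak{m}\supseteq J(\cI)$, using $\Gamma$-stability of $J(\cI)$ and transitivity of $\Gamma$ on the fibre over $M=\mathfrak{m}\cap R$ to get $J(\cI)\subseteq MS=J(M\cL)$, and then invoking Proposition~\ref{inclusionprop}. Where you genuinely diverge is the well-definedness step. The paper proves that $I\cL$ is maximal \emph{after} surjectivity: it takes a maximal ideal $\cI$ of $\cL$ containing $I\cL$ (tacitly using that such a $\cI$ exists, which needs the Noetherian hypothesis on $R$ since $\cL$ has no unit), writes $\cI=M\cL$ by the surjectivity argument, and deduces $I=M$ by intersecting $J$-images with $R$. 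You instead prove maximality of $I\cL$ directly: by the Chinese Remainder Theorem $S/IS$ is a finite product of fields whose factors $\Gamma$ permutes transitively, so the only $\Gamma$-stable ideals of $S$ containing $IS$ are $IS$ and $S$, and pulling this back along the injective, inclusion-preserving, $\Gamma$-stable-valued map $J$ (Propositions~\ref{inclusionprop} and \ref{Gammastable}, with $J(\cL)=S$) shows the only $R$-ideals of $\cL$ above $I\cL$ are $I\cL$ and $\cL$; Lemma~\ref{maximal} then upgrades this to maximality as a $k$-ideal. Your route costs a small extra argument about ideals of a product of fields, but it buys a self-contained proof that $\psi$ lands in $\Max(\cL)$, independent of the surjectivity step and of the existence of maximal ideals above a given proper ideal of $\cL$, a point the paper leaves implicit; the paper's route is slightly leaner in machinery because it recycles the computation already done for surjectivity.
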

\proof Let $\cI$ be a maximal ideal of $\cL$, and let $J=J(\cI)\subseteq S$ be the ideal corresponding to $\cI$.  Let $P\subseteq S$ be a maximal ideal containing $J$, and let $M=P\cap R$.  Since $S/R$ is integral, $M$ is a maximal ideal of $R$ \cite[\S2.1 proposition 1]{CA}.

As explained in Lemma \ref{MSintersectionlem}(1), the Galois group $\Gamma$ acts transitively on the finite set $M_1,\ldots,M_N$ of maximal ideals $S$ lying over $M$.  Since $J$ is $\Gamma$-stable (Proposition \ref{Gammastable}) and contained in a maximal ideal $P$ lying over $M$, we see that $J\subseteq\bigcap_{i=1}^NM_i$.  By Lemma \ref{MSintersectionlem}(2), $MS=\bigcap_{i=1}^NM_i$.  Hence $J\subseteq MS$.

Note that $M\cL$ is an ideal of $\cL$ whose corresponding ideal is $MS$, by Lemma \ref{correspidealofIL}.  By Proposition \ref{inclusionprop}, $\cI\subseteq M\cL$.  Since $MS=\bigcap_{i=1}^NM_i$ is a proper ideal of $S$, Lemma \ref{existenceofJ} guarantees that $M\cL$ is a proper ideal of $\cL$.  Hence $\cI=M\cL$ by the maximality of $\cI$, so the image of the map $\psi$ includes all maximal ideals of $\cL$.

Let $I_1$ and $I_2$ be maximal ideals of $R.$  If $I_1\cL=I_2\cL$ then $I_1S = I_2S$ by Proposition \ref{inclusionprop} and Lemma \ref{correspidealofIL}. Now Lemma \ref{basiclemma}(2) yields that $I_1 = I_2$, hence that $\psi$ is injective.  It remains only to check that $I\cL\subseteq\cL$ is maximal whenever $I\subseteq R$ is maximal.  Suppose that $I\subseteq R$ is a maximal ideal, and let $\cI\subseteq \cL$ be a maximal ideal containing $I\cL$.  We have already shown that there is a maximal ideal $M\subseteq R$ for which $\cI=M\cL$.  By Lemma \ref{basiclemma}(2) and Lemma \ref{correspidealofIL}, $M=MS\cap R=J(M\cL)\cap R=J(\cI)\cap R.$  By Proposition \ref{inclusionprop}, $J(I\cL)\subseteq J(\cI)$, so
$$I=IS\cap R=J(I\cL)\cap R\subseteq J(\cI)\cap R=M.$$
By the maximality of $I$, we see that $I=M$.  Hence $I\cL=M\cL=\cI$ is a maximal ideal of $\cL$.\qed

As an application, we recover the following well-known fact ({\it cf.}  \cite[III Cor. 5.2]{KnusOjanguren}).

\begin{corollary} Let $\mathcal A$ be an Azumaya algebra over $R.$ Every (two-sided) maximal ideal of $\mathcal A$ is of the form $I\mathcal A$ for some maximal ideal $I$ of $R.$ \qed
\end{corollary}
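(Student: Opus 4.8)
The plan is to derive this corollary directly from Theorem~\ref{bijection} by exhibiting any Azumaya algebra $\mathcal A$ over $R$ as an $S/R$-form of $\fg(R)$ for a suitable choice of finite-dimensional central simple $k$-algebra $\fg$ and finite Galois extension $S/R$. First I would recall that an Azumaya algebra over $R$ is by definition a twisted form, in the fppf (equivalently \'etale) topology, of a matrix algebra $\Mat_n(R)$; more precisely, there is a faithfully flat \'etale cover $R'/R$ splitting $\mathcal A$, i.e. $\mathcal A\ot_R R'\simeq\Mat_n(R')$. The subtlety is that $\mathcal A$ is split by an \'etale cover, not necessarily by a \emph{Galois} one, whereas Theorem~\ref{bijection} was stated for a fixed finite Galois extension $S/R$. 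However, the conclusion of Theorem~\ref{bijection} is purely about the lattice of two-sided ideals, and this is insensitive to replacing the splitting cover by a finite Galois one containing it: one may pass from the \'etale cover $R'$ to a finite Galois extension $S/R$ through which $R'\to S$ factors, so that $\mathcal A\ot_R S\simeq\Mat_n(S)=\Mat_n(k)\ot_k S=\fg(S)$ with $\fg=\Mat_n(k)$, which is indeed a finite-dimensional central simple $k$-algebra. (When $R$ is connected this is standard; in general one argues componentwise, or invokes the fact that Azumaya algebras are always split by a finite Galois cover, cf.~\cite{KnusOjanguren}.)

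Once $\mathcal A$ is identified with an $S/R$-form $\cL$ of $\fg(R)$ in this way, Theorem~\ref{bijection} applies verbatim: the two-sided maximal ideals of $\mathcal A$ are exactly those of the form $I\mathcal A$ for $I$ a maximal ideal of $R$, and the map $I\mapsto I\mathcal A$ is a bijection onto them. I would simply cite Theorem~\ref{bijection} for the conclusion, noting that the bijectivity statement is even slightly stronger than what the corollary asks for. The one point requiring a sentence of justification is the compatibility of the two notions of ``form'': that a two-sided ideal of $\mathcal A$ qua $R$-algebra is the same thing as an $R$-ideal of $\cL$ in the sense of Section~\ref{ketto}, which is immediate once the isomorphism $\mathcal A\simeq\cL$ of $R$-algebras is in hand.

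The main obstacle is the bookkeeping around Galois versus \'etale splitting covers, and the possibility that $\Spec R$ is disconnected (so that the ``rank'' $n$ of the matrix algebra may vary from component to component, forcing $\fg$ to be a product of matrix algebras of different sizes --- still central simple over $k$ only if $\Spec R$ is connected). If one wants the cleanest statement, I would either restrict the discussion implicitly to the connected case and leave the general case to a remark, or phrase the reduction so that it works component by component, applying Theorem~\ref{bijection} on each connected piece and assembling the ideals. In any event this is routine descent bookkeeping rather than a genuine difficulty, which is why the corollary is stated without proof.
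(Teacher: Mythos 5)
Your proposal is correct and matches the paper's intended argument: the corollary is stated without proof precisely because, once $\mathcal A$ is recognized as an $S/R$-form of $\Mat_n(k)\ot_k R$ for a finite Galois splitting extension $S/R$ (the standard fact from \cite{KnusOjanguren}), it is an immediate application of Theorem~\ref{bijection}. Your extra care about refining the \'etale splitting cover to a Galois one and handling disconnected $\Spec R$ componentwise is exactly the routine bookkeeping the paper leaves implicit.
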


\section{Classification of simple modules}\label{harom}

We maintain the notation of the previous section but now assume that $\fg$ is a finite-dimensional simple Lie algebra over an algebraically closed field $k$ of characteristic zero.  The base ring $R$ will be of finite type in $\kalg$, and all modules (representations) will be of finite dimension over $k$.  Unless explicitly indicated otherwise, $\ot$ will denote a tensor product $\ot_k$ taken over the base field $k$.

Let $\cL \subseteq \fg \otimes S$ be an $S/R$-form of $\fg(R)$ as before, and let $\phi:\ \cL\rightarrow \End_k(V)$ be a finite-dimensional irreducible representation of $\cL$.  We fix a cocycle $u\in Z^1\big(\Gamma,\Aut_{S-Lie}(\fg(S))\big)$ so that $\cL=\cL_u.$

 \subsection{Evaluation maps and simple modules} Since $\cL$ is perfect, $\cL/\ker\phi$ is a finite-dimensional semisimple Lie algebra over $k$ \cite[Prop 2.1]{multiloop}.  Hence there is an isomorphism
$$f:\ \cL/\ker\phi\longrightarrow\fg_1\oplus\cdots\oplus\fg_n$$
for some collection of finite-dimensional simple $k$-Lie algebras $\fg_1,\ldots,\fg_n$.  Let $\pi:\cL\rightarrow\cL/\ker\phi$ be the natural projection.  Then
$$\cL/\ker\phi\simeq\cL/\cM_1\oplus\cdots\oplus\cL/\cM_n$$
where $\cM_1,\ldots,\cM_n$ are pairwise distinct maximal ideals of $\cL$ whose intersection is $\ker\phi$.  More precisely, we can take
$$\cM_i=\pi^{-1}\circ f^{-1}(\fg_1\oplus\cdots\oplus\widehat{\fg_i}\oplus\cdots\oplus\fg_n)$$
for $i=1,\ldots,n,$ where $\widehat{\fg_i}$ indicates that the $i$th summand is omitted.  To classify the simple modules of $\cL$, it thus suffices to consider quotients of $\cL$ by maximal ideals.\footnote{Recall that there is no difference in the concept of maximal ideal if we view $\cL$ as an $R$- or $k$-Lie algebra.}

Let $\cI\subseteq \cL$ be a maximal ideal.  By Theorem \ref{bijection}, $\cI=I\cL$ for some maximal ideal $I\subseteq R$.  Let $P\subseteq S$ be a maximal ideal lying over $I$, and let
\begin{equation}\label{eps}
\epsilon:\ S\longrightarrow S/P\simeq k
\end{equation}
be the natural evaluation map.\footnote{$S$ is of finite type over $R$ and $R$ is assumed to be of finite type over $k.$ Thus $S$ is of finite type over $k$ and therefore $S/P \simeq k$ by the Nullstellensatz.}  Then the composition
\begin{equation}\label{evmap}
\ev_P:\ \cL\hookrightarrow\fg\ot S\stackrel{1\ot\epsilon}\longrightarrow\fg\ot k\simeq \fg
\end{equation}
is a homomorphism of $k$-Lie algebras.
\begin{proposition}
The map $\ev_P:\ \cL\rightarrow\fg$ is surjective and has kernel $\cI=(P\cap R)\cL$.
\end{proposition}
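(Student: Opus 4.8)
The plan is to establish the two assertions — that $\ker(\ev_P) = (P\cap R)\cL$ and that $\ev_P$ is surjective — by transferring everything to the split algebra $\fg(S)=\fg\ot S$ where the computations are transparent, then descending. First I would identify the kernel. Write $I = P\cap R$, which is a maximal ideal of $R$ by Lemma \ref{basiclemma}(2) and Lemma \ref{MSintersectionlem}(1). By Lemma \ref{correspidealofIL}, the ideal of $S$ associated to $I\cL$ is $J(I\cL) = IS$, and by Lemma \ref{MSintersectionlem}(2) we have $IS = \bigcap_j M_j$, the intersection over the maximal ideals of $S$ lying over $I$; in particular $IS \subseteq P$. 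Viewing $\cL \subseteq \fg\ot S$, an element $z=\sum_i x_i\ot s_i$ (in terms of a fixed $k$-basis $\{x_1,\dots,x_m\}$ of $\fg$) lies in $\ker(\ev_P)$ exactly when $1\ot\epsilon$ kills it, i.e. $\epsilon(s_i)=0$, i.e. $s_i\in P$ for all $i$, i.e. $z\in \fg\ot P$. So $\ker(\ev_P) = \cL\cap(\fg\ot P)$. I then want $\cL\cap(\fg\ot P) = I\cL$. One inclusion is easy: $I\cL \subseteq \fg\ot IS \subseteq \fg\ot P$ (using $J(I\cL)=IS$), and trivially $I\cL\subseteq\cL$. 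For the reverse inclusion, the cleanest argument uses the ideal structure: $\cL\cap(\fg\ot P)$ is an $R$-ideal of $\cL$ (it is an $S$-submodule of $\fg\ot S$ stable under $\cL$-multiplication, hence in particular $R$-stable and $\cL$-stable), so by Lemma \ref{existenceofJ} it has an associated $\Gamma$-stable ideal $J' = J(\cL\cap(\fg\ot P))$. Since $\cL\cap(\fg\ot P)\subseteq\fg\ot P$ we get $J'\subseteq P$, and $\Gamma$-stability gives $J'\subseteq \bigcap_{\gamma}{}^\gamma P = \bigcap_j M_j = IS = J(I\cL)$. By the injectivity part of Proposition \ref{inclusionprop}, $\cL\cap(\fg\ot P) \subseteq I\cL$, so the two ideals coincide and $\ker(\ev_P) = I\cL = (P\cap R)\cL$.

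For surjectivity, the point is that passing from $\cL$ to $\fg(S)$ and then evaluating at $P$ does not lose anything, because the form becomes split after base change. Concretely, $\ev_P$ factors as $\cL \hookrightarrow \cL\ot_R S \xrightarrow{\mu} \fg\ot S \xrightarrow{1\ot\epsilon}\fg$, and I want the composite to be onto. By faithful flatness (Lemma \ref{basiclemma}), the inclusion $\cL\hookrightarrow\fg\ot S$ is the restriction-to-fixed-points of a descent datum, and crucially $\cL$ spans $\fg\ot S$ as an $S$-module: indeed $\mu:\cL\ot_R S\xrightarrow{\sim}\fg\ot S$ is an isomorphism, so every element of $\fg\ot S$ — in particular every $x_i\ot 1$ — is an $S$-linear combination $\sum_\ell (1\ot t_\ell)\,z_\ell$ with $z_\ell\in\cL$. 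Applying $1\ot\epsilon$ and writing $c_\ell = \epsilon(t_\ell)\in k$, we get $x_i = \sum_\ell c_\ell\,\ev_P(z_\ell)$, so $x_i$ lies in the image of $\ev_P$. Since the $x_i$ form a basis of $\fg$, the map $\ev_P$ is surjective.

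The main obstacle is the reverse inclusion $\cL\cap(\fg\ot P)\subseteq I\cL$ in the kernel computation: naively one might try to manipulate elements directly, but the cocycle twisting makes this awkward, and the clean route is to recognize $\cL\cap(\fg\ot P)$ as an $R$-ideal and invoke the injectivity of $J$ from Proposition \ref{inclusionprop} together with the $\Gamma$-stability from Proposition \ref{Gammastable} — the latter is exactly what forces $J'$ into $\bigcap_j M_j = IS$ rather than merely into $P$. Everything else (the description of $\ker(\ev_P)$ as $\cL\cap(\fg\ot P)$, and surjectivity via the $S$-spanning property of $\cL$) is routine once this structural point is in place. One should also record that $\ev_P$ is visibly a homomorphism of $k$-Lie algebras, as noted just before the statement, so that $\cL/\ker(\ev_P)\xrightarrow{\sim}\fg$ as Lie algebras, recovering $\fg$ as the simple quotient attached to the maximal ideal $\cI$.
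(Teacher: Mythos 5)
Your proof is correct, and your surjectivity argument is essentially identical to the paper's (pull $x\ot 1$ back through the isomorphism $\mu:\cL\ot_RS\to\fg(S)$ and apply $1\ot\epsilon$). Where you diverge is the kernel. The paper's argument is much shorter: it checks directly that $I\cL\subseteq\ker\ev_P$ (since $\eps(r)=0$ for $r\in I=P\cap R$), and then simply observes that $I\cL$ is a \emph{maximal} ideal by Theorem \ref{bijection} while $\ev_P$ is nonzero, so the kernel, being a proper ideal containing $I\cL$, must equal $I\cL$. You instead identify $\ker\ev_P=\cL\cap(\fg\ot P)$, note this is an $R$-ideal, and run it through the $J$-correspondence: $J'\subseteq P$, $\Gamma$-stability forces $J'\subseteq\bigcap_\gamma{}^\gamma P=IS=J(I\cL)$, and Proposition \ref{inclusionprop} gives the reverse inclusion. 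Every step of this checks out (in particular $\{{}^\gamma P\}_{\gamma\in\Gamma}$ is exactly the set of maximal ideals over $I$ by transitivity, so Lemma \ref{MSintersectionlem}(2) applies). Your route is longer but buys a concrete structural description of the kernel as $\cL\cap(\fg\ot P)$ and reproves maximality of $I\cL$ implicitly rather than citing it; the paper's route is the one to prefer for economy, since Theorem \ref{bijection} is already in hand.
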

\proof The multiplication map
$$\mu:\ \cL\ot_R S\longrightarrow\fg(S)$$
is an isomorphism (\ref{mult}), so given any element $x\in \fg$, there exist elements $z_i\in\cL$ and $t_i\in S$ such that
$$\mu\left(\sum_i z_i\ot t_i\right)=x\ot 1.$$
That is, if $z_i=\sum_j x_j\ot s_{ij}$ for some $k$-basis $\{ x_j\}$ of $\fg$ and $s_{ij}\in S$, then $\sum_{i,j}x_j\ot s_{ij}t_i=x\ot 1$.  Applying the map $1\ot \epsilon$ introduced in (\ref{eps}), we get $\sum_{i,j}x_j\ot \eps(s_{ij})\eps(t_i)=x\ot 1$.  But $\cL$ is closed under multiplication by elements of $k$, so $\sum_i\eps(t_i)z_i\in\cL$, and
$$\ev_P\left(\sum_i\eps(t_i)z_i\right)=\sum_{i,j}x_j\eps(s_{ij})\eps(t_i)=x.$$
Hence $\ev_P$ is surjective.

Let $z=\sum_i x_i\ot s_i\in\cL$ and $r\in I$.  Then $\eps(r)=0$, since $I=P\cap R\subseteq P=\ker\eps$.  Hence
\begin{align*}
\ev_P(rz)&=\sum x_i\eps(rs_i)\\
&=\sum x_i\eps(r)\eps(s_i)\\
&=0,
\end{align*}
so $I\cL\subseteq\ker\ev_P$.  Since $\cI=I\cL$ is a maximal ideal and $\ev_P$ is nonzero, the kernel of $\ev_P$ is precisely $\cI$.\qed

\bigskip

We have now shown that $\cL/\ker\phi$ is isomorphic to a direct sum of finitely many copies of $\fg$.  Explicitly, $\ker\phi$ is the intersection of a (finite) family of distinct maximal ideals $\cM_1,\ldots,\cM_n$ in $\cL$.  Let $I_1,\ldots,I_n$ be the (distinct) maximal ideals of $R$ given by Theorem \ref{bijection}.  For any collection $\underline{M}$ of maximal ideals $M_1,\ldots,M_n$ of $S$ lying over $I_1,\ldots ,I_n$, respectively, the map
\begin{align*}
\ev_{\underline{M}}=(\ev_{M_1},\ldots,\ev_{M_n}):\ \cL&\longrightarrow\fg\oplus\cdots\oplus\fg\\
z&\longmapsto(\ev_{M_1}(z),\ldots,\ev_{M_n}(z))
\end{align*}
descends to an isomorphism $\ev_{\underline{M}}:\cL/\ker\phi\rightarrow\fg\oplus\cdots\oplus\fg.$

Since the irreducible representations of $\fg^{\oplus n}=\fg\oplus\cdots\oplus\fg$ are precisely the tensor products
\begin{eqnarray*}
\rho=(\rho_1,\ldots,\rho_n):\fg\oplus\cdots\oplus\fg&\longrightarrow&\End_k(V_1\ot\cdots\ot V_n)\\
(x_1,\ldots,x_n)&\longmapsto&\sum_{i=1}^n\hbox{id}\ot\cdots\ot\rho_i(x_i)\ot\cdots\ot\hbox{id}
\end{eqnarray*}
of simple $\fg$-modules $(\rho_i,V_i)$, we now have a complete list of the simple $\cL$-modules.
\begin{theorem}\label{completelist}
Let $\phi:\ \cL\rightarrow\hbox{\em End}_k(V)$ be a finite-dimensional irreducible representation of $\cL$.  Then there exists a finite collection $\underline{P}=(P_1,\ldots,P_n)$ of maximal ideals of $S$ with $P_i\cap R\neq P_j\cap R$ for $i\neq j$, and a simple $\fg^{\oplus n}$-module $(\rho,V_1\ot\cdots\ot V_n)$ such that
$$V\simeq V_1\ot\cdots\ot V_n\hbox{\ and\ }\phi=\rho\circ \hbox{{\em ev}}_{\underline{P}}.$$ \qed
\end{theorem}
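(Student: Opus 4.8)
The plan is to assemble the facts already established in this section and in Section~\ref{ketto}, with the only genuinely representation-theoretic input coming at the very end. Because $\cL$ is perfect, $\cL/\ker\phi$ is a finite-dimensional semisimple Lie algebra, so we may write $\cL/\ker\phi\simeq\fg_1\oplus\cdots\oplus\fg_n$ with each $\fg_i$ simple, and correspondingly $\ker\phi=\cM_1\cap\cdots\cap\cM_n$ for pairwise distinct maximal ideals $\cM_i\subseteq\cL$ with $\cL/\cM_i\simeq\fg_i$. By Theorem~\ref{bijection}, each $\cM_i=I_i\cL$ for a maximal ideal $I_i\subseteq R$, and the $I_i$ are pairwise distinct because $\psi$ is injective. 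Using Lemma~\ref{MSintersectionlem}(1), I would choose, for each $i$, a maximal ideal $P_i\subseteq S$ lying over $I_i$; then $P_i\cap R=I_i$, so $P_i\cap R\neq P_j\cap R$ for $i\neq j$. Set $\underline{P}=(P_1,\ldots,P_n)$.

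Next I would invoke the preceding Proposition: $\ev_{P_i}:\cL\rightarrow\fg$ is surjective with kernel $(P_i\cap R)\cL=I_i\cL=\cM_i$; in particular $\fg_i\simeq\cL/\cM_i\simeq\fg$ for every $i$. Then the combined map $\ev_{\underline{P}}=(\ev_{P_1},\ldots,\ev_{P_n}):\cL\rightarrow\fg^{\oplus n}$ has kernel $\bigcap_i\cM_i=\ker\phi$, and since the $\cM_i$ are pairwise distinct maximal (hence pairwise comaximal) ideals, the induced injection $\cL/\ker\phi\hookrightarrow\fg^{\oplus n}$ is onto by the Chinese Remainder Theorem; this is exactly the decomposition $\cL/\ker\phi\simeq\bigoplus_i\cL/\cM_i$ noted above. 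Hence $\ev_{\underline{P}}$ descends to an isomorphism $\cL/\ker\phi\stackrel{\simeq}{\rightarrow}\fg^{\oplus n}$.

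Finally, $\phi$ factors through this isomorphism, so $V$ becomes a finite-dimensional irreducible $\fg^{\oplus n}$-module. Since $k$ is algebraically closed of characteristic zero, every such module is an outer tensor product $V_1\ot\cdots\ot V_n$ of finite-dimensional irreducible $\fg$-modules: this follows from highest-weight theory for the semisimple Lie algebra $\fg^{\oplus n}$, or equivalently from the identification $U(\fg^{\oplus n})\simeq U(\fg)^{\ot n}$ together with the standard classification of simple modules over a tensor product of $k$-algebras when $k$ is algebraically closed. Writing $\rho=(\rho_1,\ldots,\rho_n)$ for the associated representation of $\fg^{\oplus n}$, I would conclude $V\simeq V_1\ot\cdots\ot V_n$ and $\phi=\rho\circ\ev_{\underline{P}}$, which is the assertion.

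I expect the only step requiring genuine input — rather than bookkeeping against Theorem~\ref{bijection}, Lemma~\ref{MSintersectionlem}, and the surjectivity/kernel computation for $\ev_P$ — to be the last one, the reduction of irreducible $\fg^{\oplus n}$-modules to outer tensor products. This is where algebraic closedness and characteristic zero are used (through Schur's lemma and the tensor-product structure of the enveloping algebra); over a general base field one would instead have to track the division-algebra endomorphism rings $\End_{\fg}(V_i)$, which would complicate the statement. Everything preceding that step is formal once the maximal-ideal correspondence of Section~\ref{ketto} is in place.
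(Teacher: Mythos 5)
Your argument is correct and follows essentially the same route as the paper: perfectness gives semisimplicity of $\cL/\ker\phi$, Theorem \ref{bijection} identifies the maximal ideals $\cM_i$ as $I_i\cL$, the surjectivity/kernel computation for $\ev_{P_i}$ identifies each simple summand with $\fg$, and the classification of irreducible $\fg^{\oplus n}$-modules as outer tensor products finishes the proof. Your explicit appeal to the Chinese Remainder Theorem for surjectivity of $\ev_{\underline{P}}$ is just a slightly more explicit version of the paper's observation that $\cL/\ker\phi\simeq\bigoplus_i\cL/\cM_i$.
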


\bigskip

\begin{remark}{\em
The converse of Theorem \ref{completelist} is obvious.  Given a collection $P_1,\ldots,P_n$ of maximal ideals of $S$ for which the ideals $P_i\cap R$ of $R$ are pairwise distinct, the Chinese Remainder Theorem gives an isomorphism
$$\cL/\cM_1\oplus\cdots\oplus\cL/\cM_n\simeq\cL\left/\cap_i\cM_i\right.,$$
where $\cM_i=(P_i\cap R)\cL$.  (This uses the fact that the $P_i\cap R$ are maximal, as shown in the proof of Theorem \ref{bijection}.)  Thus the map
$$\cL\longrightarrow\cL/\cM_1\oplus\cdots\oplus\cL/\cM_n\simeq \fg^{\oplus n}$$
is surjective, so the pullback of any simple $\fg^{\oplus n}$-module $V=V_1\ot\cdots\ot V_n$ will be a simple $\cL$-module.
}\end{remark}

\subsection{Isomorphism classes of simple modules}\label{negy}

 Fix a Cartan subalgebra $\fh$ of $\fg$ and  an \'epinglage of $(\fg, \fh)$ (see \cite[VIII, \S4.1]{Lie}).  Given a maximal ideal $M\in\Max (S)$ and a finite dimensional representation $\rho:\ \fg\rightarrow\hbox{End}_k(W)$, we write $W(M)$ for the vector space $W$, viewed as an $\cL$-module with action given by the composition of maps
$$\cL\hookrightarrow\fg\ot S\stackrel{\ev_M}\longrightarrow\fg\stackrel{\rho}\longrightarrow\hbox{End}_k(W),$$
where $\ev_M$ is the quotient map
\begin{eqnarray*}
\ev_M:\ \fg\ot S&\longrightarrow&(\fg\ot S)/(\fg\ot M)=\fg\ot (S/M)\simeq\fg\\
x\ot s&\longmapsto&(x\ot s)(M)=s(M)x
\end{eqnarray*}
for all $x\in\fg$ and $s\in S$.  For each automorphism $\ga\in\Aut_{S-Lie}\big(\fg(S)\big)$ and $M\in\Max(S)$, we write $\ga(M)\in\Aut(\fg)$ for the automorphism defined by
$$\big(\ga(M)\big)(x)=\big(\ga(x\ot 1)\big)(M)=\ev_M\big(\ga(x\ot 1)\big),$$
for each $x\in\fg$.  It is straightforward to verify that the map
\begin{eqnarray*}
\Aut_{S-Lie}\big(\fg(S)\big)&\longrightarrow&\Aut(\fg)\\
\ga&\longmapsto&\ga(M)
\end{eqnarray*}
is a group homomorphism for each $M\in\Max(S)$.  We write $\hbox{Out}\,\ga(M)$ and $\hbox{Int}\,\ga(M)$, respectively, for the outer and inner parts, respectively, of the automorphism $\ga(M)=\hbox{Int}\,\ga(M)\circ\hbox{Out}\,\ga(M)$.  See \cite[VIII, \S5.3 Corollaire 1]{Lie} for details.

By Theorem \ref{completelist}, the (finite-dimensional) simple $\cL$-modules are those of the form $V(\gl,M)=V_{\gl_1}(M_1)\ot\cdots\ot V_{\gl_n}(M_n)$, where each $\gl_i$ is in the set $P_+^\times$ of nonzero dominant integral weights, $V_{\gl_i}$ is the simple $\fg$-module of highest weight $\gl_i$, and $M=(M_1,\ldots,M_n)$ is an $n$-tuple of maximal ideals of $S$ lying over distinct (closed) points of $\hbox{Spec}(R)$.  

\begin{lemma}\label{oversamepoints}
Suppose that the $\cL$-modules $V(\gl,M)=V_{\gl_1}(M_1)\ot\cdots\ot V_{\gl_m}(M_m)$ and $V(\mu,N)=V_{\mu_1}(N_1)\ot\cdots\ot V_{\mu_n}(N_n)$ are isomorphic for certain $\gl_1,\ldots,\gl_m,$ $\mu_1,\ldots,\mu_n\in P_+^\times$ and $M_1,\ldots,M_m,N_1,\ldots,N_n\in \Max(S)$.  Then $m=n$, and up to reordering, $M_i\cap R=N_i\cap R$ for all $i$.
\end{lemma}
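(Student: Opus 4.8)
The plan is to compare the annihilators in $\cL$ of the two modules, to identify each such annihilator as an irredundant intersection of the maximal ideals $(P\cap R)\cL$ produced in Section \ref{harom}, and then to invoke the injectivity of $\psi$ from Theorem \ref{bijection}. Recall that, by the standing convention for the notation $V(\gl,M)$, the maximal ideals $M_1\cap R,\dots,M_m\cap R$ of $R$ are pairwise distinct, and likewise $N_1\cap R,\dots,N_n\cap R$.

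The first step is to pin down the two annihilators. Since $\fg$ is simple and each $\gl_i\in P_+^\times$ is nonzero, the $\fg$-module $V_{\gl_i}$ is faithful; hence the $\fg^{\oplus m}$-module $V_{\gl_1}\ot\cdots\ot V_{\gl_m}$ is faithful, because its kernel is an ideal of $\fg^{\oplus m}$ that cannot contain any of the simple summands $\fg$ (otherwise the corresponding $V_{\gl_i}$ would fail to be faithful) and so must be zero. As established in Section \ref{harom}, the map $\ev_{\underline{M}}\colon\cL\to\fg^{\oplus m}$ is surjective with kernel $\bigcap_{i=1}^m (M_i\cap R)\cL$; composing with the faithful action of $\fg^{\oplus m}$ on $V_{\gl_1}\ot\cdots\ot V_{\gl_m}$ shows that the annihilator of $V(\gl,M)$ in $\cL$ equals $\bigcap_{i=1}^m (M_i\cap R)\cL$, and similarly the annihilator of $V(\mu,N)$ equals $\bigcap_{j=1}^n (N_j\cap R)\cL$. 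An $\cL$-module isomorphism $V(\gl,M)\simeq V(\mu,N)$ conjugates one representation into the other and therefore forces these two ideals to coincide; write $K$ for the common ideal.

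It then remains to read off $m=n$ and the matching of points from $K$. By Theorem \ref{bijection}, $(M_1\cap R)\cL,\dots,(M_m\cap R)\cL$ are $m$ pairwise distinct maximal ideals of $\cL$, and $(N_1\cap R)\cL,\dots,(N_n\cap R)\cL$ are $n$ pairwise distinct maximal ideals. Since $\cL$ is perfect, $\cL/K$ is a finite-dimensional semisimple Lie algebra, and each of the two expressions for $K$ realizes $\cL/K$ as the direct sum of the corresponding simple quotients $\cL/(M_i\cap R)\cL$ (respectively $\cL/(N_j\cap R)\cL$). Hence the set of maximal ideals of $\cL/K$ is simultaneously $\{(M_i\cap R)\cL/K\}_{i=1}^m$ and $\{(N_j\cap R)\cL/K\}_{j=1}^n$; comparing these two descriptions gives $m=n$ and, after reordering, $(M_i\cap R)\cL=(N_i\cap R)\cL$ for every $i$. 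Finally, the injectivity of $\psi$ (Theorem \ref{bijection}) yields $M_i\cap R=N_i\cap R$, which is the assertion of the lemma.

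I expect the only mildly delicate point to be the faithfulness of the tensor-product module, which is exactly what guarantees that the annihilator of $V(\gl,M)$ is the \emph{full} intersection $\bigcap_i(M_i\cap R)\cL$ rather than a proper subideal; once that is secured, the rest of the argument is just the uniqueness of the decomposition of a finite-dimensional semisimple Lie algebra into simple ideals, combined with Theorem \ref{bijection}. I do not anticipate any genuine obstacle.
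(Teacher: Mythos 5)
Your argument is correct, and its first half (isomorphic modules have equal annihilators, and the annihilator of $V(\gl,M)$ is exactly $\bigcap_{i=1}^m(M_i\cap R)\cL$ because each $\gl_i\neq 0$ makes $V_{\gl_1}\ot\cdots\ot V_{\gl_m}$ a faithful $\fg^{\oplus m}$-module) coincides with the paper's starting point, which uses the same faithfulness fact implicitly here and explicitly in the proof of Proposition \ref{mainprop}. Where you diverge is in extracting $m=n$ and the matching of base points from the equality of the two intersections. The paper descends to the ring $R$: it applies Lemma \ref{basiclemma}(2) and Lemma \ref{correspidealofIL} to convert the equality of ideals of $\cL$ into $\bigcap_i(M_i\cap R)=\bigcap_j(N_j\cap R)$ in $R$, and then compares Zariski varieties, using that $\Var$ of a finite intersection of maximal ideals is the finite union of the corresponding closed points. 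You instead stay inside $\cL$: since $\cL/K$ is finite-dimensional semisimple and, by the Chinese Remainder decomposition of Section \ref{harom}, is the direct sum of the simple quotients $\cL/(M_i\cap R)\cL$ (respectively $\cL/(N_j\cap R)\cL$), the uniqueness of the simple-ideal decomposition identifies the maximal ideals of $\cL$ containing $K$ with both lists, and injectivity of $\psi$ from Theorem \ref{bijection} then gives $M_i\cap R=N_i\cap R$. Both routes are sound; yours trades the commutative-algebra step (the $J(\cdot)\cap R$ correspondence plus the variety argument) for the Lie-theoretic structure theory of $\cL/K$ already set up in Section \ref{harom}, so it is slightly more dependent on the characteristic-zero semisimplicity machinery, while the paper's version, once the kernels are identified, runs purely through the ideal correspondence of Section \ref{ketto} and elementary facts about $\Spec R$. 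Do note that both arguments (yours in counting the maximal ideals, the paper's in concluding $m=n$ from equality of point sets) use the standing convention that the $M_i\cap R$ are pairwise distinct, as are the $N_j\cap R$; you state this explicitly, which is a point the paper leaves tacit.
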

\proof Let $\phi_{\gl,M}:\ \cL\longrightarrow\hbox{End}_k\big(V(\gl,M)\big)$ and $\phi_{\mu,N}:\ \cL\longrightarrow\hbox{End}_k\big(V(\mu,N)\big)$ be the homomorphisms determining the module actions.  Since $V(\gl,M)\simeq V(\mu,N)$, their kernels are equal, so
$$\bigcap_{i=1}^m(M_i\cap R)\cL=\ker\phi_{\gl,M}=\ker\phi_{\mu,N}=\bigcap_{j=1}^n(N_j\cap R)\cL.$$

By Lemma \ref{basiclemma}(2) and Lemma \ref{correspidealofIL},
\begin{align*}
\bigcap_{i=1}^m(M_i\cap R)&=\left(\bigcap_{i=1}^m(M_i\cap R)S\right)\cap R\\
&= J\left(\bigcap_{i=1}^m(M_i\cap R)\cL\right)\cap R\\
&=J\left(\bigcap_{j=1}^n(N_j\cap R)\cL\right)\cap R\\
&=\bigcap_{j=1}^n(N_j\cap R).
\end{align*}
For $I\subseteq R$, let $\Var I$ be the set of $\mathfrak{m}\in\Spec R$ with $I\subseteq \mathfrak{m}$.  Then
\begin{align*}
\bigcup_{i=1}^m\{M_i\cap R\}&=\bigcup_{i=1}^m\Var(M_i\cap R)\\
&=\Var\left(\bigcap_{i=1}^m(M_i\cap R)\right)\\
&=\Var\left(\bigcap_{j=1}^n(N_j\cap R)\right)\\
&=\bigcup_{j=1}^n\{N_j\cap R\}.
\end{align*}
Thus $m=n$, and after reordering, $M_i\cap R=N_i\cap R$ for all $i$.\qed

\bigskip

Recall that $u_\gamma$ is the image of $\gamma\in\Gamma=\Gal(S/R)$ under the Galois cocycle $u:\ \Gamma\longrightarrow\Aut_{S-Lie}\big(\fg(S)\big)$.  The group $\Gamma$ acts on the set of pairs $(\gl,M)\in P_+^\times\times\Max(S)$ by $^\gamma(\mu,N)=\left(\mu\circ\hbox{Out}\,u_\gamma^{-1}(^\gamma N), {^\gamma}N\right)$.

\begin{proposition}\label{mainprop}
Suppose $V(\gl,M)=V_{\gl_1}(M_1)\ot\cdots\ot V_{\gl_n}(M_n)$ and $V(\mu,N)=V_{\mu_1}(N_1)\ot\cdots\ot V_{\mu_n}(N_n)$ are irreducible $\cL$-modules with $\gl,\mu\in (P_+^\times)^n$ and $M_i\cap R=N_i\cap R$ for all $i$.  Then $V(\gl,M)\simeq V(\mu,N)$ if and only if there exist $\gamma_1,\ldots,\gamma_n\in\Gamma$ such that
$$(\gl_i,M_i)= {^{\gamma_i}}(\mu_i,N_i)$$
for $i=1,\ldots,n$.
\end{proposition}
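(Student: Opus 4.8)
The plan is to compare the two modules by realizing both as pullbacks of $\fg^{\oplus n}$-modules along the single surjection $\ev_{\underline{M}}=(\ev_{M_1},\ldots,\ev_{M_n})\colon\cL\twoheadrightarrow\fg^{\oplus n}$, so that the isomorphism question reduces to one about irreducible $\fg^{\oplus n}$-modules.  The key ingredient is a formula comparing $\ev_M$ and $\ev_N$ when $M$ and $N$ lie over the same maximal ideal of $R$; this is the only point at which the cocycle $u$ intervenes.

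First I would establish: if $\gamma\in\Gamma$ and $M,N\in\Max(S)$ satisfy $M={}^{\gamma}N$, then $\ev_M=u_\gamma(M)\circ\ev_N$ as homomorphisms $\cL\to\fg$.  The computation is the one already used in Proposition~\ref{Gammastable}.  Fix a $k$-basis $\{x_i\}$ of $\fg$ and write $u_\gamma(x_i\ot 1)=\sum_j x_j\ot a_{ij}$.  Any $z=\sum_i x_i\ot s_i\in\cL$ satisfies $z=u_\gamma\,{}^{\gamma}z$, which forces $s_j=\sum_i({}^{\gamma}s_i)a_{ij}$.  Since $\gamma$ carries $N$ onto $M$ and, $k$ being algebraically closed, the residue fields $S/N$ and $S/M$ are canonically identified with $k$ (Nullstellensatz), the isomorphism $S/N\to S/M$ induced by $\gamma$ is the identity; that is, $\eps_M\circ\gamma=\eps_N$ in the notation of (\ref{eps}).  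Applying $\eps_M$ to $s_j=\sum_i({}^{\gamma}s_i)a_{ij}$ and reassembling gives $\ev_M(z)=\sum_i\eps_N(s_i)\,u_\gamma(M)(x_i)=u_\gamma(M)\bigl(\ev_N(z)\bigr)$.

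Now for the statement.  By construction the maximal ideals $I_i:=M_i\cap R=N_i\cap R$ of $R$ are pairwise distinct, so $\ev_{\underline{M}}$ is surjective (as in the discussion preceding Theorem~\ref{completelist}), and by Lemma~\ref{MSintersectionlem}(1) one may choose, for each $i$, an element $\gamma_i\in\Gamma$ with $M_i={}^{\gamma_i}N_i$.  Put $\tau_i:=u_{\gamma_i}(M_i)^{-1}\in\Aut(\fg)$.  The formula above gives $\ev_{N_i}=\tau_i\circ\ev_{M_i}$, so $V(\mu,N)$ is the pullback along $\ev_{\underline{M}}$ of the irreducible $\fg^{\oplus n}$-module $V_{\mu_1}^{\tau_1}\boxtimes\cdots\boxtimes V_{\mu_n}^{\tau_n}$, where $V_{\mu_i}^{\tau_i}$ denotes $V_{\mu_i}$ with $\fg$ acting through $\tau_i$; likewise $V(\gl,M)$ is the pullback along $\ev_{\underline{M}}$ of $V_{\gl_1}\boxtimes\cdots\boxtimes V_{\gl_n}$.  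Because $\ev_{\underline{M}}$ is surjective, these two pullbacks are isomorphic $\cL$-modules if and only if the two $\fg^{\oplus n}$-modules are isomorphic, and an outer tensor product of irreducible $\fg$-modules is isomorphic to another such exactly when the respective factors are isomorphic; hence $V(\gl,M)\simeq V(\mu,N)$ iff $V_{\gl_i}\simeq V_{\mu_i}^{\tau_i}$ for every $i$.  Finally, writing $\tau_i=\hbox{Int}\,\tau_i\circ\Out\tau_i$ with $\Out\tau_i$ preserving the \'epinglage, one has $V_{\mu_i}^{\tau_i}\simeq V_{\mu_i}^{\Out\tau_i}\simeq V_{\mu_i\circ\Out\tau_i}$, since twisting by an inner automorphism does not change the isomorphism class of a module and twisting by the \'epinglage-preserving automorphism $\Out\tau_i$ carries the highest weight $\mu_i$ to $\mu_i\circ\Out\tau_i$.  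Thus $V_{\gl_i}\simeq V_{\mu_i}^{\tau_i}$ iff $\gl_i=\mu_i\circ\Out\tau_i=\mu_i\circ\Out u_{\gamma_i}^{-1}({}^{\gamma_i}N_i)$, which is exactly the relation $(\gl_i,M_i)={}^{\gamma_i}(\mu_i,N_i)$.  In the ``only if'' direction the $\gamma_i$ are the arbitrary choices furnished by transitivity; in the ``if'' direction one runs the same chain with the given $\gamma_i$.

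The step I expect to be the real obstacle is the formula $\ev_M=u_\gamma(M)\circ\ev_N$ — in particular verifying that $\gamma$ induces the identity on residue fields, with the indexing correctly aligned — together with keeping straight the various inverses (from $u_{\gamma_i}(M_i)$ to $\tau_i=u_{\gamma_i}(M_i)^{-1}$, and from $\Out\tau_i$ to $\Out u_{\gamma_i}^{-1}({}^{\gamma_i}N_i)=(\Out u_{\gamma_i}(M_i))^{-1}$), so that the final identity matches the stated action of $\Gamma$ on $P_+^\times\times\Max(S)$ verbatim.  The remaining facts — pullback along a surjection reflects isomorphism of modules, irreducible $\fg^{\oplus n}$-modules are determined by their factors, and inner twists are harmless while \'epinglage-preserving twists act on highest weights by precomposition (Bourbaki~\cite[VIII, \S5.3]{Lie}) — are standard.
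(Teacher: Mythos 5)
Your proposal is correct and follows essentially the same route as the paper: the crux in both arguments is the identity $\ev_{M_i}=u_{\gamma_i}(M_i)\circ\ev_{N_i}$ (the paper states it as $\ev_{N_i}\circ\ev_{M_i}^{-1}=u_{\gamma_i}^{-1}(M_i)$), obtained from the same cocycle/matrix computation as in Proposition \ref{Gammastable} together with the observation that $\gamma_i$ induces the identity on the residue fields $S/N_i\simeq k\simeq S/M_i$. The only divergence is in packaging: where the paper compares highest weights relative to the triangular decomposition pulled back along $\ev_M$ and invokes \cite[Lemma 5.2]{multiloop}, you express the same step by viewing both modules as pullbacks along $\ev_{\underline{M}}$ of (twisted) $\fg^{\oplus n}$-modules and using that inner twists are harmless while the \'epinglage-preserving part precomposes the highest weight, which yields the identical criterion $\gl_i=\mu_i\circ\hbox{Out}\,u_{\gamma_i}^{-1}({}^{\gamma_i}N_i)$.
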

\proof Let $\phi_{\gl,M}:\ \cL\rightarrow\hbox{End}_k\big(V(\gl,M)\big)$ and $\phi_{\mu,N}:\ \cL\rightarrow\hbox{End}_k\big(V(\mu,N)\big)$ be the homomorphisms defining the module actions.  Since each $\gl_i$ is nonzero, the kernel of the action of $\fg^{\oplus n}$ on $V(\gl,M)$ is trivial, and the evaluation maps $\ev_{M_i}$ induce an automorphism
$$\ev_M=\ev_{M_1}\oplus\cdots\oplus\ev_{M_n}:\ \cL/\ker\phi_{\gl,M}\stackrel{\simeq}\longrightarrow\fg^{\oplus n}.$$
Similarly, $\ev_N:\ \cL/\ker\phi_{\mu,N}\longrightarrow\fg^{\oplus n}$ is a Lie algebra isomorphism.

Let $\fg=\mathfrak{n_-}\oplus\mathfrak{h}\oplus\mathfrak{n_+}$ be the triangular decomposition of $\fg$ relative to the \'epinglage of $(\fg,\mathfrak{h})$.  We pull back the corresponding triangular decomposition of $\fg^{\oplus n}$ to obtain the triangular decomposition
\begin{equation}\label{triangulardecomposition}
\cL/\ker\phi_{\gl,M}=\ev_M^{-1}(\mathfrak{n}_-^{\oplus n})\oplus\ev_M^{-1}(\mathfrak{h}^{\oplus n})\oplus\ev_M^{-1}(\mathfrak{n}_+^{\oplus n}).
\end{equation}
The representations $V(\gl,M)$ and $V(\mu,N)$ will be isomorphic precisely when they have the same highest weights relative to the decomposition (\ref{triangulardecomposition}).

 The Galois group $\Gamma=\Gal(S/R)$ acts transitively on the fibres of the pullback map $\Spec (S)\rightarrow \Spec(R)$ over maximal ideals of $R$.  Choose $\gamma_i\in\Gamma$ so that $M_i= {^{\gamma_i}}N_i$ for all $i$.

Let $\fg^i=0\oplus\cdots\oplus\fg\oplus\cdots\oplus 0$ be the $i$th component of $\fg^{\oplus n}$. Note that
\begin{align*}
\ev_M^{-1}(\fg^i)=&\bigcap_{r\neq i}\ker\ev_{M_r}\\
=&\bigcap_{r\neq i}(M_r\cap R)\cL\\
=&\bigcap_{r\neq i}(N_r\cap R)\cL\\
=&\bigcap_{r\neq i}\ker\ev_{N_r}.
\end{align*}
Therefore, $\ev_{N_j}\circ\ev_M^{-1}(\fg^i)=0$ for all $i\neq j$, and
\begin{equation*}
\ev_N\circ\ev_M^{-1}(x^i)=\iota_i\circ\ev_{N_i}\circ\ev_M^{-1}(x^i)=\iota_i\circ\ev_{N_i}\circ\ev_{M_i}^{-1}(x),
\end{equation*}
for all $x^i\in\fg^i$, where $\iota_i$ is the inclusion of $\fg$ as the $i$th component of $\fg^{\oplus n}$:
$$\iota_i:\fg\hookrightarrow 0\oplus\cdots\oplus\fg\oplus\cdots\oplus 0\subseteq\fg^{\oplus n}.$$

Relative to the decomposition (\ref{triangulardecomposition}), the highest weight of $V(\gl,M)$ is thus $\displaystyle{\sum_{i=1}^n\gl_i\circ\ev_{M_i}}$ and the highest weight of $V(\mu,N)$ is $\displaystyle{\sum_{i=1}^n\nu_i\circ\ev_{N_i}}$, where $\nu_i\in\big(\ev_{N_i}\circ\ev_{M_i}^{-1}(\fh)\big)^*$ is the highest weight of $V_{\mu_i}$, relative to the new triangular decomposition
$$\fg=\ev_{N_i}\circ\ev_{M_i}^{-1}(\mathfrak{n}_-)\oplus\ev_{N_i}\circ\ev_{M_i}^{-1}(\mathfrak{h})\oplus\ev_{N_i}\circ\ev_{M_i}^{-1}(\mathfrak{n}_+).$$

By \cite[Lemma 5.2]{multiloop}, $\nu_i=\mu_i\circ\tau_i^{-1}$, where $\tau_i=\hbox{Int}(\ev_{N_i}\circ\ev_{M_i}^{-1}).$  That is, $V(\gl,M)\simeq V(\mu,N)$ if and only if
$$\sum_{i=1}^n\gl_i\circ\ev_{M_i}= \sum_{i=1}^n\mu_i\circ\tau_i^{-1}\circ\ev_{N_i}$$
on $\ev_M^{-1}(\fh^{\oplus n})$.  For the $i$th component $\fh^i=0\oplus\cdots\oplus\fh\oplus\cdots\oplus 0$, we have $\ev_M^{-1}(\fh^i)\subseteq\ev_M^{-1}(\fg^i)=\bigcap_{j\neq i}(M_j\cap R)\cL$, so $\gl_j\circ\ev_{M_j}(\ev_M^{-1}(\fh^i))=0$ for $i\neq j$.  Therefore, $V(\gl,M)\simeq V(\mu,N)$ if and only if $\gl_i\circ\ev_{M_i}=\mu_i\circ\tau_i^{-1}\circ\ev_{N_i}$ for all $i$; that is, if and only if $\lambda_i=\mu_i\circ\hbox{Out}(\ev_{N_i}\circ\ev_{M_i}^{-1}).$

We now simplify the expression for the automorphism $\ev_{N_i}\circ \, \ev_{M_i}^{-1}:\fg\rightarrow\fg$.  For $x\in\fg$, write $\ev_{M_i}^{-1}(x)=\sum_jx_j\ot s_j+\ker{\ev_{M_i}}\in\cL/\ker\ev_{M_i}=\cL/\ker\ev_{N_i}$, where $x_j\in\fg$ and $s_j\in S$ for all $j$.  Then $\ev_{N_i}\circ\ev_{M_i}^{-1}(x)=\sum_js_j(N_i)x_j.$  By definition,
$$s_j(N_i)+N_i=s_j+N_i\in S/N_i,$$
and $s_j(N_i)\in k\subseteq R$ is clearly fixed by $\gamma_i\in\Gamma$.  Hence
$$s_j(N_i)+ {^{\gamma_i}}N_i = {^{\gamma_i}}s_j+ {^{\gamma_i}}N_i\in S/ {^{\gamma_i}}N_i=S/M_i,$$
and $s_j(N_i)= {^{\gamma_i}}s_j(M_i).$  Therefore,
$$\ev_{N_i}\circ\ev_{M_i}^{-1}(x)=\sum_j\, {^{\gamma_i}}s_j(M_i)x_j.$$
Moreover, $\sum_j x_j\ot s_j\in\cL=\{z\in\fg\ot S\ |\ u_\gamma \ ^\gamma z=z\hbox{\ for all\ }\gamma\in\Gamma\}$, so
\begin{align*}
\ev_{N_i}\circ\ev_{M_i}^{-1}(x)=& {^{\gamma_i}}\left(\sum_jx_j\ot s_j\right)(M_i)\\
=&(u_{\gamma_i})^{-1}\left(\sum_jx_j\ot s_j\right)(M_i)\\
=&u_{\gamma_i}^{-1}(M_i)\sum_js_j(M_i)x_j\\
=&u_{\gamma_i}^{-1}(M_i)(x),
\end{align*}
 and $\ev_{N_i}\circ\ev_{M_i}^{-1}=u_{\gamma_i}^{-1}(M_i)$.  Hence $V(\gl,M)\simeq V(\mu,N)$ if and only if there exist $\gamma_1,\ldots,\gamma_n\in\Gamma$ such that $^{\gamma_i}(\mu_i,N_i)=(\gl_i,M_i)$ for all $i$.\qed

\bigskip
\bigskip

We identify the $\cL$-module $V(\gl,M)=V_{\gl_1}(M_1)\ot\cdots\ot V_{\gl_n}(M_n)$ with the map
$$\chi_{[\gl,M]}:\ \hbox{Max}(S)\rightarrow P_+,$$
where $\chi_{[\gl,M]}=\sum_{\gamma\in\Gamma}\sum_{i=1}^n\chi_{^\gamma(\gl_i,M_i)}$ and
\begin{eqnarray*}
\chi_{(\mu_i,N_i)}:\ \hbox{Max}(S)&\rightarrow&P_+\\
I&\mapsto&\left\{\begin{array}{ll}
\mu_i &\hbox{if}\ I=N_i\\
0 &\hbox{otherwise.}
\end{array}\right.
\end{eqnarray*}
The Galois group $\Gamma$ acts on the set $\mathcal{F}$ of finitely supported functions $f:\ \hbox{Max}(S)\rightarrow P_+$, by identifying each function $f$ with the set of ordered pairs $\{(f(M),M)\ |\ M\in\hbox{Max}(S)\}$ and defining ${^\gamma}f=\{{^\gamma}\big(f(M),M\big)\ |\ M\in\hbox{Max}(S)\}$.  The function $\chi_{[\gamma,M]}$ is then $\Gamma$-invariant, and the set $\mathcal{F}^\Gamma$ of $\Gamma$-invariant functions in $\mathcal{F}$ is in bijection with the set $\mathcal{C}$ of isomorphism classes $[V]$ of (finite-dimensional) simple $\cL$-modules $V$:
\begin{theorem}\label{correspondence} The map $\psi:\ [V(\gl,M)]\mapsto \chi_{[\gl,M]}$ is a well-defined natural bijection between $\mathcal{C}$ and $\mathcal{F}^\Gamma$.
\end{theorem}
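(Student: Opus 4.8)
The plan is to assemble Theorem \ref{correspondence} from the three pieces already established: Theorem \ref{completelist} (every simple module is an evaluation module $V(\gl,M)$ with the base points distinct), Lemma \ref{oversamepoints} (isomorphic modules have the same number of tensor factors and, after reordering, share base points over $R$), and Proposition \ref{mainprop} (the precise isomorphism criterion $(\gl_i,M_i)={}^{\gamma_i}(\mu_i,N_i)$). The map $\psi$ sends the isomorphism class of $V(\gl,M)$ to the $\Gamma$-invariant function $\chi_{[\gl,M]}$; I must check that $\psi$ is (a) well-defined on isomorphism classes, (b) injective, (c) surjective onto $\mathcal{F}^\Gamma$, and (d) natural.

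First I would verify that $\chi_{[\gl,M]}$ really lies in $\mathcal{F}^\Gamma$: it is a finite sum of the finitely supported functions $\chi_{{}^\gamma(\gl_i,M_i)}$, so it is finitely supported, and by construction it is a sum over a full $\Gamma$-orbit of pairs, hence invariant under the $\Gamma$-action on $\mathcal{F}$ defined just before the theorem. One subtlety to record here: since the base points $M_i\cap R$ are pairwise distinct and $\Gamma$ acts transitively on each fibre, the $\Gamma$-orbits of the pairs $(\gl_i,M_i)$ are disjoint for different $i$, so no collision occurs in the sum and the value of $\chi_{[\gl,M]}$ at a given $M$ is a single weight, not a sum — this is exactly why $\chi_{[\gl,M]}$ takes values in $P_+$ rather than in some monoid of multisets. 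For well-definedness, if $V(\gl,M)\simeq V(\mu,N)$ then by Proposition \ref{mainprop} (after the reordering supplied by Lemma \ref{oversamepoints}) each $(\mu_i,N_i)$ lies in the $\Gamma$-orbit of $(\gl_i,M_i)$, so $\sum_i\sum_\gamma \chi_{{}^\gamma(\mu_i,N_i)}=\sum_i\sum_\gamma\chi_{{}^\gamma(\gl_i,M_i)}$; thus $\chi_{[\mu,N]}=\chi_{[\gl,M]}$.

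Next, for injectivity I run the same argument in reverse: if $\chi_{[\gl,M]}=\chi_{[\mu,N]}$, then comparing supports shows the two functions are supported on the same finite union of $\Gamma$-orbits in $\Max(S)$, and on each such orbit the common value determines, for each $i$, that $(\mu_i,N_i)$ is $\Gamma$-conjugate to some $(\gl_j,M_j)$ lying over the same point of $\Spec R$; since those base points are distinct, $j$ is forced, we recover $m=n$ and the matching required by Proposition \ref{mainprop}, hence $V(\gl,M)\simeq V(\mu,N)$. For surjectivity, given $f\in\mathcal{F}^\Gamma$, its (finite) support decomposes into $\Gamma$-orbits $O_1,\ldots,O_n$ lying over distinct points of $\Spec R$; from each $O_i$ pick one pair $(\gl_i,M_i)$ with $\gl_i=f(M_i)\in P_+$, noting $\gl_i\neq 0$ because $f$ is supported at $M_i$, so $\gl_i\in P_+^\times$. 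Then $V(\gl,M)=V_{\gl_1}(M_1)\ot\cdots\ot V_{\gl_n}(M_n)$ is a simple $\cL$-module (by the remark after Theorem \ref{completelist}) with $\psi([V(\gl,M)])=\chi_{[\gl,M]}=f$, using $\Gamma$-invariance of $f$ to see that $f$ is constant-up-to-the-$\Gamma$-action on each orbit and so is reconstructed from the chosen representatives. Naturality is the statement that the bijection is compatible with, say, extension of the ground data or pullback along morphisms of the relevant diagrams; I would spell this out as compatibility of $\psi$ with the evident functorial constructions on both sides and check it is immediate from the explicit formula for $\chi$.

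**The main obstacle** is bookkeeping around the $\Gamma$-action on pairs rather than any deep idea: one must be careful that the action ${}^\gamma(\mu,N)=(\mu\circ\Out u_\gamma^{-1}({}^\gamma N),{}^\gamma N)$ twists the weight by the outer part of the cocycle, so "same $\Gamma$-orbit" is genuinely finer than "same base point plus same weight," and the reconstruction in the surjectivity step must use a consistent choice of orbit representatives — but since $f$ is $\Gamma$-invariant, any choice yields the same module up to isomorphism, precisely by Proposition \ref{mainprop}. The one genuinely non-formal input is the disjointness of the $\Gamma$-orbits over distinct base points, which guarantees that summing the orbit-characters $\chi_{{}^\gamma(\gl_i,M_i)}$ does not merge two different $\gl_i$'s into one value; this is what makes $\chi_{[\gl,M]}$ a well-defined element of $\mathcal{F}$ and not merely of a multiset-valued analogue. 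Everything else is a direct translation of Theorem \ref{completelist}, Lemma \ref{oversamepoints}, and Proposition \ref{mainprop} into the language of $\Gamma$-invariant functions.
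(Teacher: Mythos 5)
Your proposal is correct and follows essentially the same route as the paper: both assemble Theorem \ref{completelist}, Lemma \ref{oversamepoints}, and Proposition \ref{mainprop} to show that isomorphism of modules is equivalent to equality of the functions $\chi_{[\gl,M]}$ (giving well-definedness and injectivity), and both obtain surjectivity by decomposing the support of an $f\in\mathcal{F}^\Gamma$ into $\Gamma$-orbits and evaluating $f$ at chosen orbit representatives. (Your side remark that the value at a point is a single weight is only literally accurate when the stabilizer of $M_i$ in $\Gamma$ acts trivially on $\gl_i$, but since the paper's own definition sums $\chi_{{}^\gamma(\gl_i,M_i)}$ over all of $\Gamma$, this does not affect the argument.)
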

\proof By Theorem \ref{completelist}, Lemma \ref{oversamepoints}, and Proposition \ref{mainprop}, two simple $\cL$-modules $W_1$ and $W_2$ are isomorphic if and only if there exist $n\geq 0$, ordered pairs $(M,\gl),(N,\mu)\in\big(\Max(S)\big)^n\times(P_+^\times)^n$ with $M_i\cap R=N_i\cap R\neq N_j\cap R = M_j\cap R$ for $i\neq j$, and $\gamma_1,\ldots,\gamma_n\in\Gamma$ such that $W_1\simeq V(\gl,M)$, $W_2\simeq V(\mu,N)$, and $(M_i,\gl_i)={^{\gamma_i}}(N_i,\mu_i)$ for all $i$.  Thus $V(\gl,M)\simeq V(\mu,N)$ if and only if $\chi_{[\gl,M]}=\chi_{[\mu,N]}$.  In particular, the map $\psi:\ \mathcal{C}\rightarrow\mathcal{F}^\Gamma$ is well-defined and injective.  It is also surjective, as the support of any $f\in\mathcal{F}^\Gamma$ decomposes into a disjoint union of $\Gamma$-orbits.  Therefore, $\displaystyle{f=\sum_{\gamma\in\Gamma}\sum_{i=1}^m\chi_{^\gamma(\gl_i,M_i)}}$ for some collection of orbit representatives $M_1,\ldots ,M_m\in\Max(S)$.\qed

\section{Applications}\label{ot}
Throughout this section, $k$ will denote an algebraically closed field of characteristic zero.
\subsection{Multiloop algebras}\label{multiloopsection}
Multiloop algebras are multivariable generalizations of the loop algebras in affine Kac-Moody theory.  The study of these algebras and their extensions includes a substantial body of work on (twisted and untwisted) multiloop, toroidal, and extended affine Lie algebras.  The representation theory of multiloop algebras has also been adapted to include generalized current algebras and equivariant map algebras \cite{CFK,NSS}.  When $R$ and $S$ are Laurent polynomial rings, the intersection of these classes of algebras with the class of twisted forms discussed in the present paper includes multiloop algebras (\ref{multiloopsection}), but not Margaux algebras (\ref{Azumayasection}), for instance.

Let $\fg$ be a finite-dimensional simple Lie algebra over $k$, with commuting automorphisms $\gs_1,\ldots,\gs_N:\ \fg\longrightarrow\fg$ of finite orders $m_1,\ldots,m_N$, respectively.  Fix a primitive $m_j$th root of unity $\xi_j\in k$ for each $j$, and let $R=k[t_1^{\pm m_1},\ldots,t_N^{\pm m_N}]\subseteq S=k[t_1^{\pm 1},\ldots,t_N^{\pm 1}].$

The (twisted) multiloop algebra $\cL=\cL(\fg,\gs)$ is a $\Z^N$-graded subalgebra of $\fg(S)=\fg\ot S$:
$$\cL(\fg,\gs)=\bigoplus_{j\in \Z^N}\fg_j\ot t^j,$$
where $j=(j_1,\ldots,j_N)$, $\fg_j=\{x\in\fg\ |\ \gs_i(x)=\xi_i^{j_i}x\hbox{\ for\ }i=1,\ldots,N\}$, and $t^j=t_1^{j_1}t_2^{j_2}\cdots t_N^{j_N}$.  It is easy to see that $\cL$ is a Lie algebra over $R$ and an $S/R$-form of $\fg(R)$.  

Specializing our main theorems to the case of multiloop algebras, we recover the results of \cite{multiloop}.  Maximal ideals $M_i=M_{a_i}=(t_1-a_{i1},\ldots,t_N-a_{iN})$ of $S$ correspond to points $a_i=(a_{i1},\ldots,a_{iN})$ on the algebraic $n$-torus $(k^\times)^N=k^\times\times\cdots\times k^\times$.  Note that $M_i\cap R$ is the ideal (of $R$) of polynomials vanishing at $a_i$.  Thus $M_i\cap R\in \hbox{Max}\,R$ is generated by $\{t_1^{m_1}-a_{i1}^{m_1},\ldots,t_N^{m_N}-a_{iN}^{m_N}\}$.  Therefore, $M_i\cap R=M_j\cap R$ if and only if $m(a_i)=m(a_j)$, where we write $m(a_\ell)=(a_{\ell 1}^{m_1},\ldots,a_{\ell N}^{m_N})$ for all $a_\ell\in(k^\times)^N$.

The Galois group $\Gamma=\Gal(S/R)$ is $\Z_{m_1}\times\cdots\times\Z_{m_N},$ where each $\Z_{m_i}$ is generated by an element 
$\displaystyle{\ga_i:\ t_j\mapsto\left\{\begin{array}{ll}
\xi_it_i&\hbox{if\ }i=j\\
t_j&\hbox{otherwise.}
\end{array}\right.}$
The $1$-cocycle $u:\ \Gamma\rightarrow\Aut_{S-Lie}(\fg(S))$ corresponding to $\cL$ is given by
$$u_\gamma=\gs_1^{-r_1}\cdots\gs_N^{-r_N}\ot 1,$$
for each $\gamma=(\ga_1^{r_1},\ldots,\ga_N^{r_N})\in\Gamma.$
Then $u_\gamma(M)=\gs_1^{-r_1}\cdots\gs_N^{-r_N}$ for all $M\in\Max(S)$.  The fact that 
\begin{align*}
u_\gamma:\ \Max(S)&\rightarrow \Aut\,\fg   \\
M&\mapsto u_\gamma(M)
\end{align*}
is constant means that the action of $\Gamma$ on $P_+^\times\times\Max(S)$ splits into separate actions of $\Gamma$ on $\Max(S)$ and on $P_+^\times$ by
\begin{align*}
\psi:\ \Gamma\times P_+^\times&\rightarrow P_+^\times\\
(\gamma,\gl)&\mapsto \gl\circ\hbox{Out}\,\gs_1^{-r_1}\cdots\gs_N^{-r_N}.
\end{align*}
In this language, $\Gamma$ acts on $P_+^\times\times\Max(S)$ as $^\gamma(\gl,M)=(\psi(\gamma^{-1},\gl), ^\gamma M)$.  The $\Gamma$-invariant functions $\chi_{[\gl,M]}:\ \Max(S)\rightarrow P_+$ become $\Gamma$-equivariant functions under the new action $\psi$ on $P_+^\times$.  We thus recover the following theorem \cite[Cor 4.4, Thm 4.5, and Cor 5.10]{multiloop}:
\begin{theorem}

\begin{enumerate}
\item[{\rm(1)}] The finite-dimensional simple modules of $\cL(\fg;\gs)$ are those of the form $V(\gl,a)=V_{\gl_1}(M_{a_1})\ot\cdots\ot V_{\gl_n}(M_{a_n})$ for $n\geq 0$, $a_i\in(k^\times)^N$, and $m(a_i)\neq m(a_j)$ whenever $i\neq j$.  
\item[{\rm(2)}] The isomorphism classes of finite-dimensional simple $\cL(\fg;\gs)$-modules are in bijection with the finitely supported $\Gamma$-equivariant maps $(k^\times)^N\rightarrow P_+$.
\end{enumerate}
\end{theorem}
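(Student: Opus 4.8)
Both statements follow by combining the general Theorems~\ref{completelist} and~\ref{correspondence} with the explicit descriptions of $\Max(S)$, $\Gamma$, and the attached cocycle $u$ set out in the discussion above. Three preliminary points fix the situation. First, a finite-dimensional simple Lie algebra over an algebraically closed field of characteristic zero has centroid $k$, hence is central simple as a (non-associative) $k$-algebra, so the hypotheses of Section~\ref{ketto} are met. Second, $\cL(\fg,\gs)$ is an $S/R$-form of $\fg(R)$ whose Galois cocycle is $u_\gamma=\gs_1^{-r_1}\cdots\gs_N^{-r_N}\ot 1$ for $\gamma=(\ga_1^{r_1},\ldots,\ga_N^{r_N})\in\Gamma$. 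Third, since $S=k[t_1^{\pm 1},\ldots,t_N^{\pm 1}]$ is of finite type over $k$, the Nullstellensatz identifies $\Max(S)$ with $(k^\times)^N$ via $a\mapsto M_a=(t_1-a_1,\ldots,t_N-a_N)$, and---writing any element of $R$ as a Laurent polynomial in the $t_\ell^{m_\ell}$---one gets $M_a\cap R=M_b\cap R$ exactly when $m(a)=m(b)$, as already computed above.

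Given this dictionary, part~(1) is just a restatement of Theorem~\ref{completelist}: a finite-dimensional simple $\cL$-module has the form $V_{\gl_1}(P_1)\ot\cdots\ot V_{\gl_n}(P_n)$ with $\gl_i$ nonzero dominant integral, $P_i\in\Max(S)$, and $P_i\cap R\neq P_j\cap R$ for $i\neq j$; substituting $P_i=M_{a_i}$ converts the condition $P_i\cap R\neq P_j\cap R$ into $m(a_i)\neq m(a_j)$, which is the assertion. The converse, that each such module is simple, is the Remark following Theorem~\ref{completelist}.

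For part~(2) I would feed this into Theorem~\ref{correspondence}, which puts the isomorphism classes of finite-dimensional simple $\cL$-modules in bijection with the finitely supported $\Gamma$-invariant functions $\Max(S)\to P_+$, $\Gamma$ acting on such a function through its graph by the coupled action ${^\gamma}(\mu,N)=\big(\mu\circ\Out u_\gamma^{-1}({^\gamma}N),\,{^\gamma}N\big)$ on $P_+\times\Max(S)$. The key observation is that in the multiloop case $u_\gamma(M)=\gs_1^{-r_1}\cdots\gs_N^{-r_N}$ does not depend on $M\in\Max(S)$; hence the coupled action splits as the product of the natural $\Gamma$-action on $\Max(S)\cong(k^\times)^N$ coming from the Galois action on $S$ and an action of $\Gamma$ on $P_+^\times$ through $\psi$, extended to fix $0\in P_+$. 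For a genuine product action, a function identified with its graph is invariant exactly when it is equivariant; so $\mathcal{F}^\Gamma$ is precisely the set of finitely supported $\Gamma$-equivariant maps $(k^\times)^N\to P_+$, which is part~(2).

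The routine but care-demanding step is this final identification of ``coupled-action invariant'' with ``product-action equivariant'': one has to keep the inverse conventions straight---the graph action carries $u_\gamma^{-1}$ and so pairs with $\psi(\gamma^{-1},-)$ rather than $\psi(\gamma,-)$---and to check that the zero weight, being fixed by every outer automorphism, causes no inconsistency off the support of $f$. Everything else is a direct unwinding of the definitions assembled immediately before the theorem.
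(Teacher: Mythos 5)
Your proposal is correct and follows essentially the same route as the paper: the paper also obtains this theorem purely by specializing Theorem~\ref{completelist} and Theorem~\ref{correspondence}, using the identification $\Max(S)\cong(k^\times)^N$, the computation $M_a\cap R=M_b\cap R\iff m(a)=m(b)$, and the observation that $u_\gamma(M)$ is independent of $M$, so that the coupled $\Gamma$-action on $P_+^\times\times\Max(S)$ splits and $\Gamma$-invariant functions become $\Gamma$-equivariant ones. Your extra remarks (centrality of $\fg$, the inverse convention in the graph action, and the zero weight off the support) are accurate refinements of the same argument.
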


\subsection{Azumaya and Margaux algebras}\label{Azumayasection}
Fix Laurent polynomial rings $R=k[t_1^{\pm 2},t_2^{\pm 2}]$ and $S=k[t_1^{\pm 1},t_2^{\pm 1}]$.  Let $A=A(1,2)$ be the standard Azumaya algebra, the unital associative $R$-algebra generated by $\{T_1^{\pm 1},T_2^{\pm 1}\}$ with relations $T_2T_1=-T_1T_2$ and $T_i^2=t_i^2$ for $i=1,2$.  Then $A$ is an $S/R$-form of the associative algebra $M_2(R)$ of $2\times 2$ matrices over $R$, as can be readily verified using one of the well-known representations of the quaternions as matrices in $M_2(\mathbb{C})$.

Since ${\bf PGL_2}$ is the automorphism group (scheme) of both $M_2(k)$ and $\mathfrak{sl}_2(k)$, there is a natural correspondence between $S/R$-forms of $M_2(R)$ and $\mathfrak{sl}_2(R)$.  Namely, given any $S/R$-form $B$ of the matrix algebra $M_2(R)$, view $B$ as a Lie algebra $\hbox{Lie}\,B$ with bracket $[a,b]=ab-ba$.  Its derived subalgebra $(\hbox{Lie}\,B)'=\hbox{Span}\{[a,b]\ |\ a,b\in B\}$ is then an $S/R$-form of $\mathfrak{sl}_2(R)$.

Applying this construction to $\cL_1=(\hbox{Lie}\,A)'$ and computing explicitly, it follows that $\cL_1\simeq\cL(\mathfrak{sl}_2(k),\gs_1,\gs_2)$ where $\gs_1$ and $\gs_2$ are conjugation by $\displaystyle{\left(\begin{array}{cc}
1 &0\\
0&-1
\end{array}\right)}$ and $\displaystyle{\left(\begin{array}{cc}
0 &1\\
1&0
\end{array}\right)},$ respectively \cite{GP}.  Therefore, we obtain the representations of $\cL_1$ as in the previous section.

Surprisingly, not every twisted form of $\fg(k[t_1^{\pm 1},t_2^{\pm 1}])$ is a multiloop algebra.  This can be seen using loop torsors.  The only known $S/R$-forms of $\fg(R)$ which are not isomorphic to multiloop algebras are called {\em Margaux algebras}.  The simplest of these can be constructed concretely as follows.  See \cite{GP} for details.

Let $A$, $R$, and $S$ be as in Section \ref{multiloopsection}.  The right $A$-module
$$M=\{(\gl,\mu)\in A\oplus A\ |\ (1+T_1)\gl=(1+T_2)\mu\}$$
is projective but not free.  This can be used to show that its endomorphism ring $\cM=\hbox{End}_A(M)$, while also an $S/R$-form of $M_2(R)$, is not isomorphic to $A$ as an $A$-algebra.  It follows that $\cL_1$ and $\cL_2=(\hbox{Lie}\,\cM)'$ are non-isomorphic $S/R$-forms of $\mathfrak{sl}_2(R)$.  By the classification of involutions in $\hbox{PGL}_2(k)$ and a study of loop torsors, it can be shown that $\cL_2$ is {\em not} a (twisted) multiloop algebra.

By Theorems \ref{completelist} and \ref{correspondence}, the irreducible representations of $\cL_2$ are the tensor products $V(\gl,M)=V_{\gl_1}(M_1)\ot\cdots\ot V_{\gl_n}(M_n)$, where $\gl_1,\ldots,\gl_n\in\Z_+\setminus\{0\}$ are highest weights of $\mathfrak{sl}_2(k)$ and $M_i=\langle t_1-a_{i1},t_2-a_{i2}\rangle$ are maximal ideals of $S=k[t_1^{\pm 1},t_2^{\pm 1}]$ corresponding to points in distinct fibres over $\Spec R$.  That is, $(a_{i1}^2,a_{i2}^2)\neq (a_{j1}^2,a_{j2}^2)$ for $i\neq j$.

Two such representations $V(\gl,M)=V_{\gl_1}(M_1)\ot\cdots\ot V_{\gl_m}(M_m)$ and $V(\mu,N)=V_{\mu_1}(N_1)\ot\cdots\ot V_{\mu_n}(N_n)$ are isomorphic precisely when the corresponding $\Gal(S/R)$-invariant functions $\chi_{[\gl,M]}$ and $\chi_{[\mu,N]}$ are equal.  But the action
$$^\gamma(\gl_i,M_i)=(\gl_i\circ\hbox{Out}\,u_\gamma^{-1}(^\gamma M_i),{^\gamma} M_i)$$
is simply an action on $\Max(S)$:
$${^\gamma}(\gl_i,M_i)=(\gl_i, {^\gamma} M_i),$$
since $u_\gamma^{-1}(^\gamma M)\in\hbox{Aut}\,\mathfrak{sl}_2(k)$, and every automorphism of $\mathfrak{sl}_2(k)$ is inner!  Thus $V(\gl,M)\simeq V(\mu,N)$ if and only if (after reordering the tensor factors) $m=n$, $\gl_i=\mu_i$, and the $a_i,b_i\in k^\times\times k^\times$ corresponding to $M_i$ and $N_i$ satisfy $a_{ij}=\pm b_{ij}$ for all $i$ and $j$.

As for any Galois extension $S/R$, the isomorphism classes of the (finite-dimensional) simple modules of any $S/R$-form of $\mathfrak{sl}_2(R)$ are given by restrictions of the same evaluation modules of $\mathfrak{sl}_2(S)$.  In particular, the irreducible $\cL_1$- and $\cL_2$-modules come from the same $\mathfrak{sl}_2(S)$-modules.


\begin{thebibliography}{}

\bibitem{CA} N.~Bourbaki, \'El\'ements de math\'ematique: Alg\`ebre commutative, Chapitre 5, Hermann, Paris, 1964.

\bibitem{Lie} N.~Bourbaki, \'El\'ements de math\'ematique: Groupes et alg\`ebres de Lie, Chapitres 7 et 8, Hermann, Paris, 1975.

\bibitem{CFK}
V.~Chari, G.~Fourier, and T.~Khandai, A categorical approach to Weyl modules, Transform. Groups {\bf 15} (2010), 517--549.


\bibitem{GP} P.~Gille and A.~Pianzola, {\it Galois cohomology and forms of algebras over Laurent polynomial rings}, Math. Ann. {\bf 338} (2007), 497--543.

\bibitem{KnusOjanguren} M.-A.~Knus and M.~Ojanguren, {Th\'eorie de la descente et alg\`ebres d'Azumaya}, Lecture Notes in Math., vol. 389, Springer, Berlin, 1974.

\bibitem{lang}S.~Lang, {Algebra}, Addison-Wesley, 1971.

\bibitem{multiloop} M.~Lau, {\it Representations of multiloop algebras}, Pacific J. Math. {\bf 245} (2010), 167--184.


\bibitem{matsumura} H.~Matsumura, {Commutative Ring Theory}, Cambridge, 1989.

\bibitem{milne} J.S.~Milne, {\'Etale Cohomology}, Princeton, 1980.

\bibitem{NSS}
E.~Neher, A.~Savage, and P.~Senesi, {\it Irreducible finite-dimensional representations of equivariant map algebras}, Trans. Amer. Math. Soc. {\bf 364} (2012), 2619--2646.

\bibitem{PiPrSu} A.~Pianzola,  D.~Prelat, and J.~Sun, {\it Descent constructions for central extensions of infinite dimensional Lie algebras}, Manuscripta Math.  {\bf 122}  (2007), 137--148.

\bibitem{senesi}
P.~Senesi, {\em Finite-dimensional representation theory of loop algebras: a survey}, Quantum Affine Algebras, Extended Affine Algebras, and their Applications (Banff, Canada, 2008), Contemp. Math., vol. 506, Amer. Math. Soc., 2010, pp. 263--283.


\end{thebibliography}
\end{document}